\newcommand\nc\newcommand
\nc\bfa{{\boldsymbol a}}\nc\bfA{{\boldsymbol A}}\nc\cA{{\mathscr A}}
\nc\bfb{{\boldsymbol b}}\nc\bfB{{\boldsymbol B}}\nc\cB{{\mathscr B}}
\nc\bfc{{\boldsymbol c}}\nc\bfC{{\boldsymbol C}}\nc\cC{{\mathscr C}}
\nc\bfd{{\boldsymbol d}}\nc\bfD{{\boldsymbol D}}\nc\cD{{\mathscr D}}
\nc\bfe{{\boldsymbol e}}\nc\bfE{{\boldsymbol E}}\nc\cE{{\mathscr E}}
\nc\bff{{\boldsymbol f}}\nc\bfF{{\boldsymbol F}}\nc\cF{{\mathscr F}}
\nc\bfg{{\boldsymbol g}}\nc\bfG{{\boldsymbol G}}\nc\cG{{\mathscr G}}
\nc\bfh{{\boldsymbol h}}\nc\bfH{{\boldsymbol H}}\nc\cH{{\mathscr H}}
\nc\bfi{{\boldsymbol i}}\nc\bfI{{\boldsymbol I}}\nc\cI{{\mathcal I}}
\nc\bfj{{\boldsymbol j}}\nc\bfJ{{\boldsymbol J}}\nc\cJ{{\mathscr J}}
\nc\bfk{{\boldsymbol k}}\nc\bfK{{\boldsymbol K}}\nc\cK{{\mathscr K}}
\nc\bfl{{\boldsymbol l}}\nc\bfL{{\boldsymbol L}}\nc\cL{{\mathscr L}}
\nc\bfm{{\boldsymbol m}}\nc\bfM{{\boldsymbol M}}\nc{\cM}{{\mathscr M}}
\nc\bfn{{\boldsymbol n}}\nc\bfN{{\boldsymbol N}}\nc\cN{{\mathscr N}}
\nc\bfo{{\boldsymbol o}}\nc\bfO{{\boldsymbol O}}\nc\cO{{\mathscr O}}
\nc\bfp{{\boldsymbol p}}\nc\bfP{{\boldsymbol P}}\nc\cP{{\mathscr P}}\nc\eP{{\EuScript P}}\nc\fP{{\mathfrak P}}
\nc\bfq{{\boldsymbol q}}\nc\bfQ{{\boldsymbol Q}}\nc\cQ{{\mathscr Q}}
\nc\bfr{{\boldsymbol r}}\nc\bfR{{\boldsymbol R}}\nc\cR{{\mathscr R}}
\nc\bfs{{\boldsymbol s}}\nc\bfS{{\boldsymbol S}}\nc\cS{{\mathscr S}}
\nc\bft{{\boldsymbol t}}\nc\bfT{{\boldsymbol T}}\nc\cT{{\mathscr T}}
\nc\bfu{{\boldsymbol u}}\nc\bfU{{\boldsymbol U}}\nc\cU{{\mathscr U}}
\nc\bfv{{\boldsymbol v}}\nc\bfV{{\boldsymbol V}}\nc\cV{{\mathscr V}}
\nc\bfw{{\boldsymbol w}}\nc\bfW{{\boldsymbol W}}\nc\cW{{\mathscr W}}
\nc\bfx{{\boldsymbol x}}\nc\bfX{{\boldsymbol X}}\nc\cX{{\mathscr X}}
\nc\bfy{{\boldsymbol y}}\nc\bfY{{\boldsymbol Y}}\nc\cY{{\mathscr Y}}
\nc\bfz{{\boldsymbol z}}\nc\bfZ{{\boldsymbol Z}}\nc\cZ{{\mathscr Z}}
\nc{\bb}{{\mathbbm{1}}}
\nc\reals{{\mathbb R}}
\nc{\half}{{\nicefrac12}}
\newtheorem{theorem}{Theorem}[section]
\newtheorem{proposition}[theorem]{Proposition}
\theoremstyle{remark}
\newtheorem{remark}{Remark}[section]
\DeclareMathOperator{\SRG}{\text {SRG}}
\newcommandx{\unsure}[2][1=]{\todo[linecolor=red,backgroundcolor=red!25,bordercolor=red,#1]{#2}}
\newcommand\redout{\bgroup\markoverwith{\textcolor{red}{\rule[0.5ex]{2pt}{0.8pt}}}\ULon}
\newcommandx{\rednote}[2][1=]{\todo[linecolor=red,backgroundcolor=red!25,bordercolor=red,#1]{#2}}
\newcommandx{\bluenote}[2][1=]{\todo[linecolor=blue,backgroundcolor=blue!25,bordercolor=blue,#1]{#2}}
\newcommandx{\yellownote}[2][1=]{\todo[linecolor=yellow,backgroundcolor=yellow!25,bordercolor=yellow,#1]{#2}}
\newcommandx{\greennote}[2][1=]{\todo[inline,linecolor=olive,backgroundcolor=green!25,bordercolor=olive,#1]{#2}}
\newcommand{\blue}[1]{{\color{black} #1}}
\begin{document}
		\title[]{Bounds for the sum of distances of spherical sets of small size}
	\author[]{Alexander Barg$^1$, Peter Boyvalenkov$^2$, Maya Stoyanova$^3$}
	\thanks{\hspace*{-.15in}$^1$ Department of ECE and ISR, University of Maryland, College Park, MD 20742, USA. Supported in part by NSF grants CCF 1814487,
 CCF 2110113 (NSF-BSF) and CCF 2104489. Email: abarg@umd.edu.}
 \thanks{$^2$ Institute of Mathematics and Informatics, Bulgarian Academy of Sciences, 8 G Bonchev Str., 1113  Sofia, Bulgaria. 
Supported in part by Bulgarian NSF project KP-06-Russia/33-2020. Email: peter@math.bas.bg.}
\thanks{$^3$ Faculty of Mathematics and Informatics, Sofia University, 5 James Bourchier Blvd., 1164 Sofia, Bulgaria.
Supported in part by Bulgarian NSF project KP-06-N32/2-2019. Email: stoyanova@fmi.uni-sofia.bg. }
 \begin{abstract}  
We derive upper and lower bounds on the sum of distances of a spherical code of size $N$ in $n$ dimensions when $N=\Theta(n^\alpha), 0<\alpha\le 2.$ The bounds are derived by specializing recent general, universal bounds on energy of spherical sets. We discuss asymptotic behavior of our bounds along with several examples of codes whose sum of distances closely follows the upper bound. 
 \end{abstract}
 		\maketitle

\vspace{-.2in}

\section{Introduction: Sum of distances and related problems}
\subsection{Problem statement and overview of results}
Let ${\cC}_N=\{z_1,\dots,z_N\}$ be a set of $N$ points (code) on the unit sphere $S^{n-1}$ in $\reals^n$. Denote by
$\tau_n({\cC}_N)=\sum_{i,j=1}^N \|z_i-z_j\|$ the sum of pairwise distances between the points in ${\cC}_N$ and let 
$\tau(n,N)=\sup_{{\cC}_N}\tau_n({\cC}_N)$ be the largest attainable sum of distances over all sets of cardinality $N$.
The problem of estimating $\tau(n,N)$ was introduced by Fejes T{\'o}th \cite{FejesToth1956} and it has been studied in 
a large number of follow-up papers, \cite{HouShao2011,BilykMatzke2019}. The main body of results in the literature are concerned with the asymptotic
regime of fixed $n$ and $N\to\infty.$ In particular, it is known that     \begin{equation}\label{eq:old-bounds}
      cN^{1-\frac1{n-1}}\le W(S^{n-1}) N^2-\tau(n,N)\le     CN^{1-\frac1{n-1}},
    \end{equation}
where $W(S^{n-1})=\iint \|x-y\|d\sigma_n(x)d\sigma_n(y)$ is the average distance on the sphere, $\sigma_n$ is the normalized (surface area) measure on the sphere. and $c, C$ are some positive constants that depend only on $n$.
The upper bound in \eqref{eq:old-bounds} is due to Alexander \cite{Alexander1972} \blue{for $n=3$ and Stolarsky \cite{Stolarsky1973} for higher dimensions}, and the lower bound was proved by Beck \cite{Beck1984}. Kuijlaars and Saff \cite{Kuijlaars1998} extended these results to bounds on the $s$-Riesz energy of spherical sets for all $s>0$, and Brauchart et al. \cite{Brauchart2012} computed next terms of the asymptotics; see also Ch.~6 in a comprehensive monograph by Borodachov et al.~\cite{BHS2019}  for a recent overview. 

In this paper we adopt a different view, allowing both the dimension $n$ and the cardinality $N$ to increase in a certain related way.
The main emphasis of this work is on obtaining explicit lower and upper bounds on the sum of distances of a spherical set ${\cC}_N$ for
$N\sim \delta n^\alpha$, \blue{for certain $\delta$ and} $0< \alpha\le 2.$ Upper bounds apply uniformly for all spherical sets, while to derive lower bounds we need to assume that the minimum pairwise distance is bounded from below (otherwise the sum of distances can be made arbitrarily small). If the minimum distance is large, then the neighbors of a point are naturally placed \blue{on or near the orthogonal subsphere (the ``equator'')}, and the distance to them is about $\sqrt 2.$ This suggests that the main term in the asymptotic expression for the sum of distances is $\sqrt2 N^2,$ and it is easy to obtain a bound of the form $\tau(n,N)\le \sqrt2 N^2(1+o(1)),$
as shown below in Sec.~\ref{sec:discrepancy}.

Our main results are related to refinements of this claim. Using linear programming, we derive lower and upper 
bounds for the sum of distances of codes of small size. For a number of code families, the sum of distances behaves as $\sqrt2 N^2,$ and the bound is asymptotically tight. We compute lower-order terms in a number of examples, including codes obtained from equiangular line sets, spherical embeddings of strongly regular graphs (two-distance tight frames), and spherical embeddings of some classes of small-size binary codes. Numerical calculations, some of which we include, confirm that the sum of distances of these codes follows closely the upper bound.

\subsection{Sum of distances and Stolarsky's invariance}\label{sec:discrepancy}
The sum of distances in a spherical code enjoys several links with other problems in
geometry of spherical sets. One of them is related to the theory of uniform distributions on the sphere. A
sequence of spherical sets $({\cC}_N)_N$ is called asymptotically uniformly distributed if for every closed set $A\subset S^{n-1}$
     $$
     \lim_{N\to\infty}\frac{|{\cC}_N\cap A|}N=\sigma_n(A).
     $$
To quantify the proximity of a sequence of sets ${\cC}_N$ to the uniform distribution on $S^{n-1},$ define 
the {\em quadratic discrepancy}\footnote{More precisely, the discrepancy is defined as $(D^{L_2}(\cC_N))^{1/2},$ and it is called the {\em spherical cap discrepancy}, as there are also other types of discrepancy on the sphere.} of ${\cC}_N:$
   \begin{equation}\label{eq:disc}
   D^{L_2}({\cC}_N):=\int_{-1}^1 \int_{S^{n-1}}\Big|\frac 1N\sum_{j=1}^N \bb_{C(x,t)}(z_j)-\sigma_n(C(x,t))\Big|^2 d\sigma_n(x) dt,
   \end{equation}
where $C(x,t)=\{y\in S^{n-1}: (x \cdot y) \ge t\}$ is a {\em spherical cap} of radius $\arccos t$ centered at $x.$ 
A classic result
states that a sequence of sets ${\cC}_N$ is asymptotically uniformly distributed if and only if $\lim_{N\to\infty} D^{L_2}({\cC}_N)=0;$
see, e.g., \cite[Theorem 6.1.5]{BHS2019}.
A fundamental relation between $\tau_n({\cC}_N)$ and $D^{L_2}({\cC}_N)$ states that the sum of these two quantities is a constant that depends only
on $N$ and $n.$ Namely,
  \begin{equation}\label{eq:Stol}
  c_n D^{L_2}({\cC}_N)=W(S^{n-1})-\frac 1{N^2}\tau_n({\cC}_N),
  \end{equation}
where 
\blue{$c_n={(n-1)\sqrt{\pi}\Gamma((n-1)/2)}/{\Gamma(n/2)}$} is a universal constant that depends only on the dimension of the sphere.
This relation was proved by Stolarsky  \cite{Stolarsky1973} and is \blue{now} known as {\em Stolarsky's invariance principle}. 
The average distance on the sphere is given by $\int_0^\pi 2\sin(\theta/2)\sin^{n-1}\theta d\theta/\int_0^\pi \sin^{n-1}\theta d\theta,$ which evaluates to
   \begin{equation*}
   W(S^{\blue{n-1}})=\frac{2^{n-1}\Gamma(n/2)^2}{\sqrt\pi\Gamma(n-\nicefrac12)}=\sqrt 2-\frac 1{4\sqrt 2n}+O(n^{-2}).
   \end{equation*}
Since $D^{L_2}({\cC}_N)\ge 0,$ the following bound is immediate: for any code ${\cC}_N\subset S^{n-1}$
   \begin{equation}\label{eq:disc-dist}
    \tau_n({\cC}_N)\le N^2\Big(\sqrt 2-\frac 1{4\sqrt 2n}+O(n^{-2})\Big).
   \end{equation}
This inequality in effect states a well-known fact that the average of a \blue{radial} negative-definite kernel, over a subset of the sphere is at most the average over the entire \blue{sphere}. It also forms a very particular case of a recent general result in 
\blue{\cite[Theorem 3.1]{BDHSS2016}}.

{\em Remarks}

1. On account of \eqref{eq:Stol}, the problem of maximizing the sum of distances is equivalent to minimizing the quadratic discrepancy, i.e., the sum of distances serves as a proxy for uniformity: a set of $N$ points on the sphere is ``more uniform'' if the sum of pairwise distances is large for its size. 

2. Sequences $({\cC}_N)$ with average distance $\sqrt 2(1+o(1))$ are asymptotically uniformly distributed. As we have already pointed out, many
sequences of codes satisfy this condition; moreover, as shown below, spherical codes obtained from the binary Kerdock and dual BCH codes match the second term in \eqref{eq:disc-dist}, implying a faster rate of convergence to the limit.

3. Extensions and generalizations of Stolarsky's invariance were proposed in recent works
\cite{Brauchart2013, BilykLacey2017,Bilyk2018,Skriganov2019,Skriganov2020,Barg2021}. In particular, \cite{Barg2021} studied quadratic discrepancy of binary codes, deriving explicit expressions as well as some bounds. Below in Sec.~\ref{sec:binary}, we 
point out that this problem is closely related to the sum-of-distances problem in the spherical case, and translate our results on bounds to the binary case. This link also motivates studying the asymptotic regime of $n\to\infty$ for spherical codes because this is the only possible asymptotics in the binary space.

\subsection{Details of our approach}
Viewing the distance $\|x-y\|$ as a two-point potential on the sphere, we can relate the problem of estimating
$\tau(n,N)$ to the search
for spherical configurations with the minimum potential energy. References \cite{Kuijlaars1998}, \cite{Brauchart2012},
\cite{BHS2019}, and many others adopt this point of view, considering the energy minimization for general classes of potential functions on the sphere. A line of works on energy 
minimization, initiated by Yudin \cite{Yudin1993,Kolushov1997} and developed by Cohn and Kumar \cite{coh07b}, uses the linear programming  bounds on codes to derive results about optimality as well as lower bounds on the energy of spherical codes. 
Extending the approach of earlier works by Yudin and Levenshtein \cite{lev83a,lev98}, the authors of
\cite{coh07b} proved optimality of several known spherical codes for all {\em absolutely monotone} potentials\footnote{A potential $h(t):[-1,1]\to\reals$ is called absolutely monotone if for every $n\ge0$ the
derivative $h^{(n)}(t)$ exists and is nonnegative for all $t$.} and called
such codes universally optimal.  In particular, denoting $t=t(x,y)=x\cdot y,$ we immediately observe that
the potential $L(t)=-\|x-y\|=-\sqrt{2(1-t)}$ \blue{fits in this scheme since $2+L(t)$} is absolutely monotone, and thus all the known universally optimal codes are
maximizers of the sum of distances.

While the results of \cite{coh07b} apply to specific spherical codes, a suite of {\em universal bounds} on the potential energy 
was derived in recent papers of Boyvalenkov, Dragnev, Hardin, Saff, and Stoyanova~\cite{BDHSS2015,BDHSS2016,BDHSS2019,BDHSS2020}. While the bounds can be written in a general form relying on the Levenshtein formalism, explicit expressions are difficult to come by. We derive an explicit form of the first few bounds in the Levenshtein hierarchy and evaluate them for the families of spherical codes mentioned above, limiting ourselves to the potential $L(t).$
Our approach can be summarized as follows.
Given an absolutely monotone potential $h$, define the minimum $h$-energy over all spherical sets of size $N$ by 
\[ E_h(n,N):=\inf_{{\cC}_N} E_h({\cC}_N), \] 
where $E_h({\cC}_N)=\sum_{i,j=1}^N h(z_i\cdot z_j).$ This quantity is bounded from below as follows:
   \begin{equation} \label{ulb-general} 
   E_h(n,N) \geq N^2\sum_{i=0}^{k-1+\varepsilon} \rho_i h(\alpha_i), 
    \end{equation}
where the positive integer $k$, the value $\varepsilon \in \{0,1\}$, and the real parameters $(\rho_i,\alpha_i)$, $i=0,1,\ldots,k-1+\varepsilon$, 
are functions of $N$ and $n$ as explained in \cite{BDHSS2016} and in Section \ref{sec:proofs-bounds} below. 
The bound \eqref{ulb-general} was called a {\em universal lower bound} (ULB) in \cite{BDHSS2016}. 
For given $k$ and $\varepsilon$ we obtain a degree-$m$ bound, $m=2k-1+\varepsilon$, where the term ``degree'' refers to the degree
of the polynomial used in the corresponding linear programming problem.
The bound of degree $m$ applies to the values of code cardinality in the segment
$D^{\ast}(n,m)\le N< D^{\ast}(n,m+1),$ where $D^{\ast}(n,m):= \binom{n+k-2+\varepsilon}{n-1}+\binom{n+k-2}{n-1}$ comes from the Delsarte, Goethals and Seidel's bound \cite{del77b} for the mimimum possible cardinality of spherical $\tau$-designs on $S^{n-1}$. The first few segments are as follows:
  $$
  [2,n),\; [n+1,2n) ,\;[2n,n(n+3)/2) ,\;[n(n+3)/2,n(n+1)) ,\;[n(n+1),n(n^2+6n+5)/6).
  $$
The results of \cite{BDHSS2016}
also imply the optimal choice of the polynomial, so the bounds we obtain cannot be improved by choosing a different polynomial of 
degree $\le m.$ The bound \eqref{ulb-general} will be expressed below in terms of $n$ and $N$ for $m=1$, 2, and 3.

Similarly, it is possible to bound the $h$-energy from above under the condition that the maximum inner product $s$ between distinct vectors in ${\cC}_N$ \blue{is fixed, or, allowing $n$ and $N$ to grow, satisfies the condition $\limsup_{n\to \infty} s<1.$ Note that if $n$ increases then so
does $N$, and the relation between them affects the asymptotic expressions.}
\blue{Consider the quantity
    $$ E_h(n,N,s):=\sup \left\{ E_h({\cC}_N): x \cdot y \leq s, x,y \in
 {\cC}_N, x \neq y\right\}, 
    $$
i.e., the supremum of $h$-energy of spherical codes of fixed dimension, cardinality, and minimum separation}. Universal upper bounds (UUBs) for $E_h(n,N,s)$
were derived in \cite{BDHSS2020}. To this end, the linear programming functional $f_0|\cC|-f(1)$ is minimized on the set of polynomials 
   $$
    \Big\{f(t)=\sum_{i=0}^{\deg(f)} f_i P_i^{(n)}(t): f(t) \geq h(t),t \in [-1,s]; f_i \leq 0,  i \geq 1 \Big\},
   $$
where $P_i^{(n)}(t)$ are the Gegenbauer polynomials (normalized by
$P_i^{(n)}(1)=1$). 
In \cite{BDHSS2020}, the authors use a specific choice of the polynomials $f(t)$ \blue{for fixed $n$, $N$, and $s$}
as explained in Section \ref{sec:proofs-bounds}. This leads to the bound \begin{equation} \label{uub-general}
\blue{E_h(n,N,s)} \leq \left(\frac{N}{N_1}-1\right)Nf(1)+N^2\sum_{i=0}^{k-1+\varepsilon} \rho_i h(\alpha_i),
\end{equation}
where this time the parameters $(\rho_i,\alpha_i)$ are functions of the dimension $n$ and the minimum separation $s,$  and \blue{$N_1=L_m(n,s)$, $m=2k-1+\varepsilon$, is the corresponding Levenshtein bound (see Sec.~\ref{sec:proofs-bounds} for additional details). The bound \eqref{uub-general} will be expressed below in terms of $n$, $N$, and $s$ for $m=1$, 2, and 3.}

\blue{While in this paper our focus is on codes of small size, a recent general result in \cite{BDHSS2019} (Theorem 7 and Corollary 1) implies the following asymptotic bound for the sum of distances:
\[     \tau_n({\cC}_N)\le \sqrt{2}N^2-\frac{N^{3/2}}{4\sqrt{2}}(1+o(1)), \]
which is applicable, in particular, for all $N$ such that $D^{\ast}(n,2l)\le N< D^{\ast}(n,2l+1), l\ge 1$.   }

\section{Bounds}\label{sec:bounds}

General bounds on energy of spherical codes obtained earlier in \cite{BDHSS2016} and \cite{BDHSS2020} apply to the sum of distances, although obtaining explicit expressions is not immediate. 
In this section we list the lower and upper bounds on the sum of distances obtained from the general results in the cited works, deferring the proof to Sec.~\ref{sec:proofs-bounds}. We limit ourselves to the first three bounds in the sequence of lower and upper bounds, noting that even in this case, the resulting expressions are unusually cumbersome.

\subsection{Upper bounds}
The following bounds on the maximum sum of distances of a spherical code in $n$ dimensions hold true:
   \begin{numcases}
{\tau(n,N) \leq} \tau_1(n,N):=N\sqrt{2N(N-1)}\label{eq:lb1}
\\[.1in]
\tau_2(n,N):=\frac{N\left(2N(N-n-1)+(N-2)\sqrt{2nN(n-1)(N-2)}\right)}{Nn+N-4n}  \label{eq:lb2}\\[.1in]
\tau_3(n,N) :=N \sqrt{\frac{2N(nA_1+2(N-n-1)^2B_1)}{n^2(n-1)^2+4n(N-n-1)(N-2n)}}  \label{eq:lb3}
  \end{numcases}
   where the first bound applies for $2\le N\le n+1,$ the second for $n+1\le N\le 2n,$ the third for $2n\le N\le n(n+3)/2,$
   and where
   \begin{gather} A_1= Nn^3+(2N-1)n^2-(N-1)(7N-2)n+(N-1)^2(2N+3), \label{eq:A1}\\
 B_1=\sqrt{n(n-1)N(N-n-1)}. \label{eq:B1}
   \end{gather}

Bound \eqref{eq:lb1} is attained by the simplex code, bound \eqref{eq:lb2} is attained by the biorthogonal
code, and bound \eqref{eq:lb3} is attained by all codes that meet the 3rd Levenshtein bound\footnote{\blue{All the known codes attaining Levenshtein bounds are listed in \cite[Table 9.1]{lev92}. There are two infinite series of codes as well as three sporadic examples that meet the 3rd bound. Some of these codes, originating from strongly regular graphs, were discovered in \cite{CGS1978} which established a condition for them to meet the 3rd Levenshtein bound; see \cite{lev92} for the details of this connection.}} \cite[p.620]{lev98}. 
Due to \eqref{eq:Stol}, these codes have the smallest quadratic discrepancy among all codes of their size. 

In the asymptotics of $n\to\infty$ bounds \eqref{eq:lb1} and \eqref{eq:lb2} yield
   \begin{align}
   \tau_1(n,N)&= \sqrt{2}N^2-\frac{N}{\blue{\sqrt{2}}}+O(1)   \quad\text{if $N\sim\delta n, 0<\delta\le 1$,} \label{eq:lb1a}\\
   \tau_2(n,N)&=\sqrt{2}N^2-2\Big(1-\delta-\frac{1-3\delta/2}{\sqrt 2}\Big) N+O(1) \quad \text{if $N\sim \delta n, 1\le \delta\le 2 $.} \label{eq:lb2a}
   \end{align}
Note that the bound \eqref{eq:lb2a} is slightly tighter than \eqref{eq:lb1a} because of a larger second term, which is greater
than $\frac 1{\sqrt{2}}$ for all $\delta > 1.$ The bound \eqref{eq:lb2a} is also uniformly better than \eqref{eq:disc-dist} for all $N=\delta n, \delta\in[1,2].$

The bound \eqref{eq:lb3} is valid for $N\le n(n+3)/2.$ Writing $N\sim \delta n^\alpha$, we note that 
its asymptotic behavior depends on $\alpha.$ For instance, for $N=\delta n^2$ we obtain
   \begin{equation} \label{eq:lb3a}
     \tau_3(n,N)= \sqrt 2 N^2 - \frac {\sqrt{2\delta}}{8} N^{3/2} + O(N).     
   \end{equation}
\blue{Here the order of the second term of the asymptotics coincides with the bound obtained from the average distance \eqref{eq:disc-dist} while the constant factor is better for all $
\delta>1.$}

\subsection{Lower bounds}
Let ${\cC}_N$ be a spherical code in $n$ dimensions, and assume that the minimum distance between distinct points $z_i,z_j\in {\cC}_N$ is bounded
from below, i.e., that $z_i\cdot z_j\le s$ for some $s\in[-1,1)$. Denote by $\tau_n(N,s)=\inf_{{\cC}_N}\tau_n({\cC}_N)$ the smallest possible sum of
distances for such codes. We have
    \begin{equation} \label{tau_lb}
    {\tau_n(N,s)\ge} \tau^{(i)}(n,N,s), i=1,2,3,
    \end{equation} 
where \blue{the bound}
    \begin{equation}\label{eq:ub1}
   \tau^{(1)}(n,N,s)=N(N-1)\sqrt{2(1-s)},
     \end{equation}
\blue{is applicable in \eqref{tau_lb} for $N \in [2,n+1]$ and $s \in [-1/(N-1),-1/n]$,}
the bound 
   \begin{equation}\label{eq:ub2}
   \tau^{(2)}(n,N,s)=\frac{N\left(2N(1-ns^2)-2n(1-s^2)+(n-1)\sqrt{2(1-s)}\right)}{n(1-s^2)},
   \end{equation}
is applicable for $n+1\le N \le 2n$ and $s \in \left[\frac{N-2n}{n(N-2)},0\right]$, and the bound
\begin{equation} \label{eq:ub3}
\tau^{(3)}(n,N,s)= \frac{N\left[A_5\left((1-s)(1+ns)A_4+B_4\sqrt{(1-s)B_5}\right)-2N(1+2s+ns^2)C_4\sqrt{A_6}\right]}{n(1-s)(1+2s+ns^2)^2C_4\sqrt{2B_5}},
\end{equation}
is applicable for $2n\le N \le n(n+3)/2$ and 
  \begin{equation}\label{eq:s}
  s \in \left[\frac{\sqrt{n^2(n-1)^2+4n(N-n-1)(N-2n)}-n(n-1)}{2n(N-n-1)},\frac{\sqrt{n+3}-1}{n+2}\right], 
  \end{equation}
where the notation in \eqref{eq:ub3} is as follows:   
  \begin{equation}\label{eq:AB}
  \begin{aligned}
 A_2 &= (1+ns)^5(1-s)+(n-1)^2((n+1)s+2), \\ 
B_2 &= (n-1)\sqrt{(1-s)(1+ns)((n+1)s+2)}\\
A_4 &= n(n+2)(n+3)s^4 + 2(3n^2+13n+8)s^3+2(n^2+12n+23)s^2+2(2n^2+5n+17)s+9n+3, \\
 B_4 &= 2(n-1)((n+1)s+2)((n-2)s^2-2ns-1), \\
 C_4 &= 2n(n+2)s^3-(n^2-5n-2)s^2-6ns-n-5, \\
 A_5 &= N(1-ns^2)-n(1-s)((n+1)s+2), \\
B_5 &=\frac{(n+1)s+2}{1+ns}, \\
A_6 &= \frac{(1-s)(A_2+2(1+ns)^2B_2)}{1+ns}.
 \end{aligned}
\end{equation}
  
{\em Remarks.}

1. Note that expression \eqref{eq:ub1} yields a trivial bound on the sum of distances, assuming that every pair of code points is at distance $\sqrt{2(1-s)}.$ It is included for completeness because it follows by optimizing the linear polynomial in the linear programming problem.

2. The bounds \eqref{eq:ub1}--\eqref{eq:ub3} are proved for $s$ in the specified intervals above but are valid \blue{at least for slightly 
larger $s$ (by continuity). For example, the bound \eqref{eq:ub1} is 
valid for all $s$. The lower limits for $s$ are determined from the
inequality $N_1 \geq N$ and the upper limits are the same as for the Levenshtein bound $L_m(n,s)$ (see in Sec.~\ref{sec:proofs-bounds} for more details).} 

3. Using Mathematica, we can compute asymptotic behavior of $\tau^{(3)}(n,N,s)$ for $n\to\infty.$ Since it depends on $s$, we do not include general expressions, leaving this for the examples.

\section{Examples of codes of small size}
In this section we consider several families of spherical codes that attain the asymptotic 
extremum of the sum of distances. We focus on sets with a small number of distinct distances because the sum of distances is easier to compute, and because their cardinalities fit the range of the parameters used to derive the bounds in the previous section.
We consider three types of objects, families of equiangular lines, strongly regular graphs, and binary codes. General introductions to their properties are found in \cite[Ch.11]{GodsilRoyle2001}, \cite{Brouwer2022}, and \cite{mac91}, respectively.

\subsection{Equiangular lines}
A family of $M$ equiangular lines in $\reals^n$ with common inner product $s$ defines a spherical code 
${\cC}_N$ with $N=2M$ vectors, each of which has inner product $s$ with $M-1$ other vectors and $-s$ with their opposites. The sum
of distances in ${\cC}_N$ equals
    \begin{align}\label{eq:sum}
    \tau_n({\cC}_N)&=\sum_{i,j=1}^{N}\|z_i-z_j\|=N((M-1)(\sqrt{2-2s}+\sqrt{2+2s})+2)\nonumber
    \\&=\frac{N^2}{\sqrt{2}}(\sqrt{1-s}+\sqrt{1+s})+O(N).
    \end{align}

For small $s$ we can write $ \sqrt{1-s}+\sqrt{1+s}= 2-\frac{s^2}4+O(s^4),$ so \blue{for $M=\Theta(n^2)$} the sum of distances will be close to the value $\sqrt2 N^2$ given by the bound \eqref{eq:lb3a}. \blue{Example 1 below illustrates this claim.}

{\sc Examples.} 

1. Constructions with $M=\Theta(n^2).$ There are several constructions of large-size sets of equiangular lines,
starting with De Caen's family \cite{deCaen2000}; see also \cite{Jedwab2015}. In all these constructions $s \to 0,$ and thus the sum of distances equals $\tau_n({\cC}_N)=\sqrt 2N^2(1+o(1)),$ showing that such families yield asymptotically optimal spherical codes. For instance, De Caen's family yields codes ${\cC}_{N}$ with the parameters 
$$n=3\cdot 2^{2r-1}-1, \ N=\frac 49 (n+1)^2, \ s=\frac 1{2^r+1}, \quad r\ge 1,$$
 and we find from \eqref{eq:sum} that 
 \[ \tau_n({\cC}_N)=\sqrt 2 N^2-\frac{1}{4\sqrt 2}N^{3/2}+O(N^{5/4}).\] 
 At the same time, on account \eqref{eq:lb3a} and \eqref{eq:ub3} any
 sequence of codes ${\cC}_N$ with $N\sim\frac49 n^2$ and $s\sim\sqrt{\frac3{2n}}$ satisfies
    $$
       \sqrt 2 N^2 - \frac{1}{5\sqrt 2} N^{7/4}-O(N^{3/2})  \le \tau_n({\cC}_N) \le \sqrt 2 N^2 - \frac 1{6\sqrt 2} N^{3/2}+O(N)
    $$
(computations for the lower bound performed with Mathematica).  
We give examples of the bounds on the sum of distances of de Caen's codes and of its true value for the first few values of $r.$
{\small
\begin{table}[H]
\begin{tabular}{|c|c|c|c|c|c|c|}
  \hline
  $r$ & $n$ & $N$ & Upper bound $\tau_3(n,N)$  & $\tau_n({\cC}_N)$ & Lower bound $\tau^{(3)}(n,N,s)$   \\[0.03in]
 \hline
    3 &    95 &      4096 &   $2.369344 \cdot 10^7$ &    $2.368643\cdot 10^7$ &    $2.341901 \cdot 10^7$  \\[0.02in]
 \hline
    4 &   383 &     65536 &    $6.0719880 \cdot 10^9$ &    $6.071317 \cdot 10^9$ &    $6.036098 \cdot 10^9$  \\[0.02in]
 \hline
    5 &  1535 &   1048576 & $1.5548171 \cdot 10^{12}$ & $1.554765 \cdot 10^{12}$ & $1.550113 \cdot 10^{12}$  \\[0.02in]
 \hline
    6 &  6143 &  16777216 & $3.9805762.10^{14}$ & $3.980539 \cdot 10^{14}$ & $3.974463 \cdot 10^{14}$  \\[0.02in]
 \hline
    7 & 24575 & 268435456 & $1.0190430 \cdot 10^{17}$ & $1.019041 \cdot 10^{17}$ & $1.018254 \cdot 10^{17}$  \\
 \hline
\end{tabular}
\end{table}
}

2. Below by $M_s(n)$ we denote the maximum number of equiangular lines in $n$ dimensions with inner product $s$. It is known \cite{LemmensSeidel1973} that $M_{1/3}(n)=2(n-1).$ Taking $N=4(n-1)$ for a given $n,$ we obtain a spherical code ${\cC}_N$ with sum of
distances equal to
   $$
   \tau_n({\cC}_N)=N((M-1)(\sqrt{4/3}+\sqrt{8/3})+2)=N^2\frac{1+\sqrt 2}{\sqrt{3}}(1+o(1)).
   $$
The constant factor in this expression is approximately $1.39.$ 
A more detailed calculation shows that 
  $$
  \lim_{n\to\infty}\frac{\tau_n({\cC}_N)}{\tau_3(n,N)}=(\sqrt 6(\sqrt 2-1))^{-1}\approx 0.9856.
  $$

3. Further, by \cite{Neumaier1989},  $M_{1/5}(n)=\lfloor 3(n-1)/2\rfloor$ for all sufficiently large $n$. This set of lines
yields a spherical code with sum of distances
  $
  \tau_n({\cC}_N)=N^2((\sqrt{2}+\sqrt{3})/\sqrt{5})(1+o(1))\approx 1.40\blue{7} N^2,$ which is again very close to \eqref{eq:lb3a}.
It is not difficult to check that  
\[   \lim_{n\to\infty} \frac{\tau_n({\cC}_N)}{\tau_3(n,N)}= \left(\sqrt2+\sqrt3\right)/\sqrt{10}\approx 0.9949.\]
  
4. A recent paper by Jiang and Polyanskii \cite{Jiang2020a} shows that $M_{1/(1+2\sqrt 2)}(n)=3n/2+O(1),$ yielding a spherical
code of size $N=3n+O(1).$ For this code, the constant factor in \eqref{eq:sum} equals 
     $$
     \frac 1{\sqrt2}\Big(\sqrt{1-\frac{1}{1+2\sqrt 2}}+\sqrt{1+\frac{1}{1+2\sqrt 2}}\Big)\approx 1.40189.
     $$
In the limit of $n\to\infty$, the sum of distances \blue{satisfies} $\tau_n({\cC}_N)/\tau_3(n,N)\to 0.991.$

More examples can be generated relying on constructions of equiangular line sets of size
$O(n^{3/2})$ based on Taylor graphs and projective planes \cite{LemmensSeidel1973}. Recent additions to the literature include new upper bounds and exact asymptotics of the size of equiangular line sets with fixed inner product $s$ \cite{Balla2018,GlazyrinYu2018,Jiang2021}.

\subsection{Strongly regular graphs and tight frames}
Here we consider the sum-of-distances function for spherical codes obtained from strongly regular graphs (SRG). A $k$-regular graph on $v$ vertices is strongly
regular if every pair of adjacent vertices has $a$ common neighbors and every pair of nonadjacent vertices has $c$ common neighbors. Below we use the notation $\SRG(v,k,a,c)$ when we need to mention the parameters explicitly. 

\blue{The spectral structure of SRGs is well known; see for instance \cite[p.~118]{Brouwer2012}, \cite{CGS1978}, or
\cite[Sec.~9.4]{Ericson2001} (the last two references highlight the relation between spherical codes and SRGs and more generally, association schemes).} The adjacency matrix of an SRG has three eigenspaces that correspond to the eigenvalues $k,r_1,r_2.$ 
Let $\Delta=(a-c)^2+4(k-c)$, then the eigenvalues other than $k$ have the form
   $$
  r_1=\frac 12(a-c+\sqrt\Delta),\quad r_2=\frac 12(a-c-\sqrt\Delta),
  $$
and the dimensions of the corresponding eigenspaces are
  \begin{equation}\label{eq:dimensions}
  n_{1,2}=\frac 12\Big(v-1\pm\frac{(v-1)(c-a)-2k}{\sqrt\Delta}\Big),
  \end{equation}
\blue{where we write $n_{1,2}$ to refer to both eigenspaces at the same time.} 
 
Spherical embeddings of SRGs were introduced by Delsarte, Goethals, and Seidel \cite{del77b}, Example~9.1. \blue{To obtain a spherical code from an SRG, assign vectors of the standard basis of 
$\reals^v$ to the vertices, and then project the basis on an eigenspace of the graph.}
In particular, using the eigenspace $W_{r_1}$ that corresponds to $r_1,$ we obtain a spherical code in $\reals^{n_1}$ with $N=v$ points and inner products
  \begin{equation}\label{eq:stheta}
  s_1=\frac{r_1}{k},\quad s_2=-\frac{1+r_1}{v-1-k}.
  \end{equation}
A similar procedure for $r_2$ yields a spherical code in $\reals^{n_2}$ with $v$ points and inner products
  \begin{equation}\label{eq:stau}
  s_1=-\frac{1+r_2}{v-1-k},\quad s_2 =\frac {r_2}k,
  \end{equation}
where in both cases $s_1\ge 0>s_2.$  We again reference \cite[Sec.~9.4]{Ericson2001} for the 
details and \cite{BB2005} for a short proof.

The distribution of distances in the obtained spherical codes does not depend on the point $z_i\in {\cC}_N.$ If
the code is obtained by projecting on $W_{r_1}$, then the number of neighbors of a point with inner product $r_1/k$ is $k$, and if it is obtained by projecting on $W_{r_2},$ then the number of neighbors of a point with inner product $r_2/k$ is $k$. Thus, in both cases, the number of neighbors with the remaining value of the inner product is $N-k-1.$

Combining \eqref{eq:dimensions}, \eqref{eq:stheta}, and \eqref{eq:stau}, we obtain 
\begin{proposition} Projecting an $\SRG(v,k,a,c)$ on the eigenspace $W_{\theta}, \theta=r_1,r_2$ results in a spherical code in $\reals^{n_{1,2}}$ of size $N=v$ whose sum of distances equals
   \begin{equation}\label{eq:stf}
   \tau_{n_{_{1,2}}}({\cC}_N)=N\left(\sqrt{2k(k-\theta)}+\sqrt{2(N-1-k)(N+\theta-k)}\right),
   \end{equation}
   where $\theta=r_1$ or $r_2$ as appropriate. 
\end{proposition}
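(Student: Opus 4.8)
The plan is to compute the sum of distances directly from the combinatorial structure of the spherical code obtained by projecting the SRG onto one of its eigenspaces, using the distance distribution that is already spelled out in the paragraph preceding the proposition. Fix the eigenvalue $\theta\in\{r_1,r_2\}$ and write $n=n_{1,2}$ for the dimension of the corresponding eigenspace and $N=v$. The key starting point is that the code is \emph{distance-invariant}: around every code point $z_i$ there are exactly $k$ points at inner product $\theta/k$ and exactly $N-1-k$ points at the remaining inner product value, call it $s'$. Hence
\[
\tau_n({\cC}_N)=\sum_{i,j}\|z_i-z_j\|
 = N\Big(k\sqrt{2(1-\theta/k)}+(N-1-k)\sqrt{2(1-s')}\Big),
\]
and it remains only to identify $s'$ and simplify each radical.

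The main computation is therefore to pin down the ``other'' inner product $s'$ in each of the two cases and verify that the two radicals collapse to the clean forms $\sqrt{2k(k-\theta)}$ and $\sqrt{2(N-1-k)(N+\theta-k)}$. For the first radical this is immediate: $k\sqrt{2(1-\theta/k)}=\sqrt{k^2\cdot 2(k-\theta)/k}=\sqrt{2k(k-\theta)}$. For the second radical I would substitute the values from \eqref{eq:stheta} and \eqref{eq:stau}. When we project on $W_{r_1}$, the neighbor-with-multiplicity-$k$ inner product is $s_1=r_1/k$ (so $\theta=r_1$ plays that role) and the multiplicity-$(N-1-k)$ inner product is $s_2=-(1+r_1)/(v-1-k)=-(1+\theta)/(N-1-k)$; then
\[
(N-1-k)\sqrt{2(1-s_2)}
 =(N-1-k)\sqrt{2\Big(1+\frac{1+\theta}{N-1-k}\Big)}
 =\sqrt{(N-1-k)\cdot 2(N-1-k+1+\theta)}
 =\sqrt{2(N-1-k)(N+\theta-k)}.
\]
When we project on $W_{r_2}$ the roles of $s_1$ and $s_2$ in \eqref{eq:stau} are swapped, so now $\theta=r_2$ is the inner product with multiplicity $k$ and $s_1=-(1+r_2)/(v-1-k)$ is the one with multiplicity $N-1-k$; the identical algebra gives the same expression with $\theta=r_2$. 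This is exactly why the proposition can state a single formula \eqref{eq:stf} valid for $\theta=r_1$ or $\theta=r_2$.

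The only genuine bookkeeping obstacle — and it is minor — is making sure the bookkeeping of ``which inner product has which multiplicity'' is consistent with the sign conventions $s_1\ge 0>s_2$ quoted after \eqref{eq:stau}, and that in the $W_{r_2}$ case it really is $\theta=r_2$ that appears with multiplicity $k$ rather than $N-1-k$. This follows from the remark in the text that ``if the code is obtained by projecting on $W_{r_2}$, then the number of neighbors of a point with inner product $r_2/k$ is $k$,'' which is in turn a restatement of the fact that the $(i,j)$ entry of the projection onto an eigenspace, normalized to put $1$'s on the diagonal, records adjacency (giving inner product $r_\theta/k$ for the $k$ neighbors in the graph) versus non-adjacency (giving the other value for the $N-1-k$ non-neighbors). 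I would include a one-line justification of this using the standard fact that the Gram matrix of the projected code is $\tfrac1\mu(\text{eigenprojector})$ rescaled so the diagonal is $1$, where $\mu$ is the common squared norm. With $s'$ identified in both cases, the formula \eqref{eq:stf} follows by the two radical simplifications above, and the $O(N)$-style asymptotic remark is not needed for the proposition itself. No step here requires the Levenshtein machinery of Section~\ref{sec:proofs-bounds}; the proposition is purely a direct evaluation, and the bounds \eqref{eq:lb3} and \eqref{eq:ub3} enter only later when these codes are compared against the universal bounds.
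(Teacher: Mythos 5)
Your proposal is correct and matches the paper's (implicit) argument exactly: the paper obtains \eqref{eq:stf} by "combining" the distance distribution (each point has $k$ neighbors at inner product $\theta/k$ and $N-1-k$ at $-(1+\theta)/(N-1-k)$) with \eqref{eq:stheta} and \eqref{eq:stau}, which is precisely the computation you carry out, and your radical simplifications are right. The only difference is that you write out the algebra and the multiplicity bookkeeping explicitly, which the paper leaves to the reader.
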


{\em Remark:} Families of spherical  codes considered below attain sums of distances that can be written in the form
$\tau_n({\cC}_N)=\sqrt 2 N^2(1+o(1)).$ A sufficient condition for this is that the eigenvalues are small compared to $N$, as can be seen upon rewriting \eqref{eq:stf} in the form
     $$
     \tau_{n_i}({\cC}_N)=\sqrt 2 N^2\Big(\sqrt{\frac{k(k-\theta)}{N^2}}+\sqrt{\frac{(N-k-1)(N-k+\theta)}{N^2}}\;\Big).
     $$
As long as $\theta/N=o(N),$ as is the case in the examples below, the main term of the asymptotic expression will be $\sqrt2 N^2.$

Spherical codes obtained from SRGs have an additional property of forming tight frames for $\reals^{n_{1}}$ or $\reals^{n_2}.$ Recall that a spherical code ${\cC}_N=\{z_1,\dots,z_N\}$ forms a {\em tight frame} for $\reals^d$ if $\sum_{i=1}^N (x \cdot z_i)^2=A\|x\|^2$ for any $x\in R^n$, where $A$ is a constant. A necessary and sufficient condition for the tight frame property to hold is the equality 
\cite{Benedetto2003}
    \begin{equation}\label{eq:fp}
    \sum_{i,j=1}^N (z_i\cdot z_j)^2=\frac {N^2}{n}.
    \end{equation}
In the frame theory literature the sum on the left-hand side of \eqref{eq:fp} is called the {\em frame potential} \cite{Waldron2018}. 

It turns out that all two-distance tight frames are obtained as spherical embeddings of SRGs \cite{BGOY15,Waldron2009}.

\noindent{\sc Examples.}

The families of graphs considered below are taken from the online database \cite{Brouwer}.

 1. Graph of points on a quadric in PG$(2m,q).$ The parameters of the SRG are 
   $$
   v=\frac{q^{2m}-1}{q-1},\;\; k=\frac{q(q^{2m-2}-1)}{q-1},\;\;a=\frac{q^2(q^{2m-4}-1)}{q-1}+q-1, \;\;c=\frac{q^{2m-2}-1}{q-1},
   $$
and the eigenvalues are $r_{1,2}=\pm q^{m-1}-1.$
Spherical embeddings of this graph give tight frames in dimensions \eqref{eq:dimensions}
   $$
   n_{1,2}=\frac12(N-1\pm q^m)\approx\frac12(N\pm\sqrt N),
   $$
which is easily seen since $\sqrt\Delta=2q^{m-1}.$
The size of the code ${\cC}_N={\cC}_N(r_1)$ is $N=v$ and the sum of distances is computed from \eqref{eq:stf} and equals
   $$
   \tau_{n_1}({\cC}_N)=N\sqrt{2(q^m+1)}\Big[\frac{q^{m-1}-1}{q-1}\sqrt{q(q^{m-1}+1)}+q^{\frac{3m-2}2}\Big].
   $$
   Taking $m\to\infty,$ we compute
   \begin{equation}\label{eq:second}
   \tau_{n_1}({\cC}_N)=\sqrt2 N^2-\frac{5}{4\sqrt2}N+O(1).
   \end{equation}
Since in this case $N\approx2n_1-2\sqrt {2n_1},$ the appropriate bound to look at is $\tau_2(n,N)$ with $\delta=2.$ The second term of the sum of distances in \eqref{eq:second} is approximately $-0.884 N$ while the second term in \eqref{eq:lb2a} is $-2(\sqrt 2-1)N\approx -0.828N.$
 
Likewise, the projection on the eigenspace $W_{r_2}$ gives a spherical code ${\cC}_N={\cC}_N(r_2)$ whose sum of distances equals
   $$
   \tau_{n_2}({\cC}_N)=N\sqrt{2(q^m-1)}\Big[\frac{q^{m-1}+1}{q-1}\sqrt{q(q^{m-1}-1)}+q^{\frac{3m-2}2}\Big].
   $$
For large $m$ this behaves as $\sqrt 2 N^2-\frac{5}{4\sqrt2}N+O(1),$ exhibiting similar behavior as the code 
in dimension $n_1.$

2. Graph of points on a hyperbolic quadric in PG$(2m-1,q)$. The parameters of the SRG are 
  \begin{equation}\label{eq:he}
  v=\frac{q^{2m-1}-1}{q-1}+q^{m-1},\;\; k=\frac{q(q^{2m-3}-1)}{q-1}+q^{m-1},\;\; a=k-q^{2m-3}-1,\;\; c=k/q,
  \end{equation}
and the eigenvalues are $r_{1}=q^{m-1}-1$ and $r_2=-q^{m-2}-1.$ 
Using \eqref{eq:he}, we obtain that
the dimensions of the spherical embeddings of this graph are
   $$
   n_1=\frac{q(q^{m-2}+1)(q^m-1)}{q^2-1}, \quad n_2=\frac{q^2(q^{2m-2}-1)}{q^2-1}
   $$
and thus, $n_1\approx N/(q+1), n_2\approx Nq/(q+1).$ The sum of distances in ${\cC}_N(r_1)$ is found to be
   $$
   \tau_{n_1}({\cC}_N)=N\sqrt{2q(q^{m-1}+1)}\Big[\frac{q^{m-1}-1}{q-1}\sqrt{q^{m-2}+1}+q^{\frac{3m}2-2}\Big].
   $$
For large $m$ we obtain $\tau_{n_1}({\cC}_N)=\sqrt 2 N^2-\frac{q+4}{4\sqrt 2}N-O(1).$ At the same time, from the bound
\eqref{eq:lb3} we obtain an upper estimate of the form $\sqrt 2 N^2-O(N),$ giving the second term of the same order, although with a smaller constant factor.

Turning to the code ${\cC}_N$ obtained by projecting on the eigenspace $W_{r_2},$ we find that
    $$
    \tau_{n_2}({\cC}_N)=N\sqrt{2(q^m-1)}\Big[\frac{q^{m-2}+1}{q-1}\sqrt{q(q^{m-1}-1)}+q^{\frac{3m}2-2}\Big],
    $$
yielding $\tau_{n_2}({\cC}_N)=\sqrt 2N^2-\frac{4q-1}{4q\sqrt 2}N-O(1)$, with similar conclusions in regards to asymptotics of the upper bound.

{\em Remark:}  It is known \cite{Benedetto2003} that $N^2/n$ is the smallest value of the frame potential in  over all $(n,N)$ spherical codes. Thus, two-distance tight frames form spherical codes in $R^n$ that have asymptotically maximum sum of distances while also minimizing the frame potential.

\subsection{Spherical embeddings of binary codes} Infinite sequences of asymptotically optimal spherical codes can be obtained by spherical embeddings of binary codes. 
Let ${\cC}_N\subset \cX_n = \{0,1\}^n$ be a binary code of length $n$ and 
cardinality $N$, and denote by $A_w=\frac{1}{N}\#\{a,b\in C: d_H(a,b)=w\}$ the average number of neighbors of a code vector at Hamming distance $w$. The $(n+1)$-tuple $(A_0=1,A_1,\dots,A_n)$ is called
the distance distribution of the code ${\cC}_N$. For a vector $z\in\cX_n$ denote by $\tilde z$ the $n$-dimensional real vector given by $\tilde z_i=(-1)^{z_i}/\sqrt n, i=1,\dots,n,$ and let $\tilde {\cC}_N\subset S^{n-1}$ be the spherical embedding of the code ${\cC}_N.$
Since $\|\tilde x-\tilde y\|=2\sqrt{d_H(x,y)/n},$ the sum
of distances in $\tilde {\cC}_N$ can be written as
    \begin{equation}\label{eq:2s}
    \sum_{i,j=1}^N\|\tilde z_i-\tilde z_j\|=\frac{2N}{\sqrt n}\sum_{w=0}^n A_w\sqrt w.
    \end{equation}
Using this correspondence, we give several examples of asymptotically optimal families of spherical codes.

\subsubsection{Sidelnikov codes} In \cite[Thm.~7]{Sid1971}, Sidelnikov constructed a class of binary linear codes $C_r, r\ge1$ 
with the parameters
   $
    n=\frac{2^{4r}-1}{2^r+1}, \;N=2^{4r}. 
    $
The distance distribution of the codes has two nonzero components (in addition to $A_0=1$):
     \begin{equation*}
   \begin{aligned}
   &w_1=\frac{2^{4r-1}-2^{2r-1}}{2^r+1}, \;A_{w_1}=2^{4r}-n-1,\\
   &w_2=\frac{2^{4r-1}+2^{3r-1}}{2^r+1},\;A_{w_2}=n.   
   \end{aligned}
   \end{equation*}
   
Let us compute the sum of distances of the spherically embedded Sidelnikov codes. Using \eqref{eq:2s}, we obtain
  $$ 
\frac {2N}{\sqrt n} (A_{w_1}\sqrt{w_1}+A_{w_2}\sqrt{w_2}))=\sqrt2 \Big(N^2 - \frac{1}{8}N^{5/4}-\frac{7}{16}N-\frac{13}{128} N^{3/4}\Big)+O(N^{1/2}).
  $$
At the same time, the bounds \eqref{eq:lb3a} and \eqref{eq:lb3} imply that for any sequence of codes ${\cC}_N$ with $N$ as above and
$s=1-2w_1/n$
    $$
          \sqrt 2 N^{2}-\frac{1}{2\sqrt 2}N^{7/4}-O(N^{11/8}) \le \tau_n({\cC}_N)\le \sqrt2 \Big(N^2 - \frac{1}{8}N^{5/4}-
          \frac{7}{16}N-\frac{5}{128}N^{3/4}\Big)+O(N^{1/2}),
   $$
   and so as $r\to \infty$ the true value agrees with the upper bound in the first three terms.
The first few values of the sum of distances together with the bounds of Sec.~\ref{sec:bounds} are shown in the table below.

{\small
\begin{table}[H]
\begin{tabular}{|c|c|c|c|c|c|c|}
  \hline
  $r$ & $n$ & $N$ & Upper bound $\tau_3(n,N)$  & $\tau_n({\cC}_N)$ & Lower bound $\tau^{(3)}(n,N,s)$   \\[0.03in]
 \hline
  1 &     5 &      16 &         $345.4941208$ &         $345.4941208$ &         $345.4941208$   \\[.02in]
\hline
    2 &    51 &     256 &       $92338.0198$ &       $92334.5230$ &       $91959.9016$   \\[.02in]
\hline
    3 &   455 &    4096 &      $2.371820900\cdot 10^7$ &      $2.371817158 \cdot 10^7$ &      $2.369984979 \cdot 10^7$   \\[.02in]
\hline
    4 &  3855 &   65536 &      $6.0737748 \cdot 10^9$ &      $6.0737745 \cdot 10^9$ &      $6.073097678 \cdot 10^9$   \\[.02in]
\hline
    5 & 31775 & 1048576 & $1.554937673 \cdot 10^{12}$ & $1.554937671 \cdot 10^{12}$ & $1.554914842\cdot 10^{12}$   \\
  \noalign{\smallskip}\hline
\end{tabular}
\end{table}
}
\vspace*{-.2in}The relative difference between the upper bound and the true value for $r=5$ is about $10^{-9},$ and the upper and lower bounds on the sum of distances are also rather close.

We next discuss some families of spherical codes obtained from binary codes of cardinality $N\approx n^2$ that share the following common property: they have a small number of nonzero distances concentrated around $n/2.$ Since the factor $\sqrt w\approx \sqrt\frac{n}2$ 
for large $n$ can be taken outside the sum in \eqref{eq:2s}, and since the nonzero coefficients $A_w$ add to $N-1$, all such families satisfy
   $$
   \tau_n({\cC}_N)\sim \sqrt 2 N^2(1+o(1)),
   $$
differing only in the lower terms of the asymptotics.

\subsubsection{Kerdock codes} \cite[\S15.5]{mac91}. Binary Kerdock codes form a family of nonlinear codes of length $n=2^{2m}, m\ge 2$ and cardinality
$N=n^2.$ The distribution of Hamming distances does not depend on the code point and the nonzero entries $(A_i)$ are as follows:
   $$
   A_0=A_n=1, A_{(n\pm\sqrt n)/2}=n(n/2-1), A_{n/2}=2(n-1).
   $$
From \eqref{eq:2s}, the sum of distances of the spherical Kerdock code equals
   $$
   \tau_{n}(\tilde {\cC}_N)=\sqrt2 N^2 -\frac{1}{4\sqrt{2}} N^{3/2} +O(N),
   $$
which agrees with the bound \eqref{eq:disc-dist}, \eqref{eq:lb3a}. Note that    
for general completely monotone potentials, the first-term optimality of the Kerdock codes was previously observed in \cite{BDHSS2015}.
 
\subsubsection{Dual BCH codes} \cite[\S15.4]{mac91}. Let ${\cC}_N$ be a linear binary BCH code of length $n=2^r-1$, $r\ge 3$ with \blue{minimum} distance $5$. 
Suppose that $r$ is odd. Then the dual code $({\cC}_N)^\bot$ has cardinality $N=2^{2r}$ and distance distribution $A_0=1$ and
  $$
  A_{\frac{n+1}2\pm \sqrt{\frac{n+1}2}}=n\Big(\frac {n+1}4\mp\frac{\sqrt{n+1}}{2\sqrt 2}\Big), \; A_{\frac{n+1}2}=\frac{n(n+3)}2.
  $$
\blue{For $r$ even the dual BCH code of length $2^r-1$ has distance distribution $A_0=1$ and
  \begin{gather*}
  A_{\frac{n+1}2\mp\sqrt{n+1}}=\frac12 n\sqrt{n+1}\Big(\sqrt{\frac{n+1}4}\pm1\Big),
  \;A_{\frac{n+1}2\mp\sqrt{\frac{n+1}2}}=\frac13n\sqrt{n+1}(\sqrt{n+1}\pm1)\\
  A_{\frac{n+1}2}=n\Big(\frac{n+1}4+1\Big).
  \end{gather*}
  %(\cite[p.452]{mac91}).
}
Using \eqref{eq:2s}, we find that the sum of distances in both cases comes out to be 
\[ \tau_{n}((\tilde {\cC}_N)^\bot)= 
\sqrt 2N^2- \frac{1}{4\sqrt{2}}N^{3/2}-O(N). \] Note that $\tau_n((\tilde {\cC}_N)^\bot)$ follows closely the upper bound \eqref{eq:disc-dist}.

Many more similar examples can be given using the known results on binary codes with few weights \cite[Ch.15]{mac91}, \cite{CK1986,D2016,LLHQ2021,WZHZ2015} (this list is far from
being complete). At the same time, obviously there are sequences of binary codes $({\cC}_N)$ 
\blue{that yield spherical codes whose sum of distances differs significantly from $\sqrt 2 N^2$}. For instance, consider the code ${\cC}_N$ formed of $\binom n2$ vectors of Hamming weight 2, then the pairwise distances are 2 and 4, and a calculation shows that $\tau_N(\tilde {\cC}_N)= (2N)^{7/4}(1+o(1)).$

\section{Sum of distances and bounds for quadratic discrepancy of binary codes}\label{sec:binary}
An analog of Stolarsky's identity \eqref{eq:Stol} for the Hamming space $\cX_n=\{0,1\}^n$ was recently derived in \cite{Barg2021}. 
For a binary code ${\cC}_N\in\cX_n$ define the {\em quadratic discrepancy} as follows:
   \begin{equation*}
   D_b^{L_2}({\cC}_N)=\sum_{t=0}^n\sum_{x\in \cX}\Big(\frac{|B(x,t)\cap {\cC}_N|}{N}-\frac{v(t)}{2^n}\Big)^2,
   \end{equation*}
where $B(x,t)=\{y\in \cX_n: d_H(x,y)\le t\}$ is the Hamming ball centered at $x$ and $v(t)=\sum_{i=0}^t\binom ni$ is its volume. 
Note that we again abuse the terminology since strictly speaking, $D_b^{L_2}({\cC}_N)$ is a square of the discrepancy; see
also footnote 1 above. We use the subscript $b$ to differentiate this quantity from it spherical counterpart defined in \eqref{eq:disc}.
\blue{To state the Hamming space version of Stolarsky's identity, let us define a function
$\lambda:{\mathbb Z}\to{\mathbb Z}$. By definition, $\lambda(0)=0$ and for $w=2i,1\le i\le \lfloor n/2\rfloor$}
   \begin{equation}\label{eq:lambda}
   \lambda(w-1)=\lambda(w)=2^{n-w}i\binom{w}{i}.
   \end{equation} 
An analog of relation \eqref{eq:disc} in the binary case has the following form:
  \begin{equation*}
    D_b^{L_2}({\cC}_N)=\frac n{2^{n+1}}{\binom{2n}n}-\frac1{N^2}\sum_{i,j=1}^N\lambda(d_H(z_i,z_j)).
  \end{equation*}
The average value of $\lambda(\cdot)$ over the code can be written in the form
  \begin{equation}\label{eq:dd}
   \frac1{N^2}\sum_{i,j=1}^N\lambda(d_H(z_i,z_j))=\frac1{N}\sum_{w=1}^n A_w\lambda(w),
   \end{equation}
where $(A_w,w=1,\dots,n)$ is the distribution of distances in ${\cC}_N$ defined above. 
Thus, the value of discrepancy of the code is determined once we know the average ``energy'' for the potential $\lambda,$ denoted
$\langle \lambda\rangle_{{\cC}_N}$. Some estimates of this quantity were proved in \cite{Barg2021}.

In this section we note that the bounds on the sum of distances derived above in Sec.~\ref{sec:bounds} imply bounds on 
$\langle \lambda\rangle_{{\cC}_N}$ via the spherical embedding, and thus also imply bounds on $D_b^{L_2}.$
Our results are based on the following simple observation. 

\begin{proposition}\label{prop:bs} Let $n$ be even and let ${\cC}_N\subset\cX_n$ be a binary code and let $\tilde {\cC}_N\subset S^{n-1}$ be its spherical embedding. We have
  \begin{equation}\label{eq:bs}
  \langle \lambda\rangle_{{\cC}_N}\le \frac{2^{n-1}}{N^2}\sqrt{\frac{n}{\pi}}\tau_n(\tilde {\cC}_N)
  \end{equation}
\end{proposition}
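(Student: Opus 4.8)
The plan is to relate the binary potential $\lambda(w)$ to the square-root weight function $\sqrt{w}$ that appears in the spherical sum of distances formula \eqref{eq:2s}, and then to invoke that formula directly. Concretely, combining \eqref{eq:dd} with \eqref{eq:2s}, the quantity we wish to bound is
\[
\langle \lambda\rangle_{\cC_N}=\frac1N\sum_{w=1}^n A_w\lambda(w),\qquad
\tau_n(\tilde\cC_N)=\frac{2N}{\sqrt n}\sum_{w=0}^n A_w\sqrt w,
\]
so it suffices to show the termwise inequality $\lambda(w)\le \dfrac{2^{n-1}}{N}\sqrt{\dfrac{n}{\pi}}\cdot\dfrac{2N}{N\sqrt n}\sqrt w$ — that is, after cancellation, $\lambda(w)\le 2^{n}\sqrt{w/\pi}$ for every $w$ — since then summing against the nonnegative coefficients $A_w/N$ and using $A_0=1$ (whose $\lambda(0)=0$ contributes nothing) yields \eqref{eq:bs}. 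Thus the proposition reduces to a pointwise estimate on $\lambda$.

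The core step is therefore to prove $\lambda(w)\le 2^n\sqrt{w/\pi}$ for $1\le w\le n$, $n$ even. By the definition \eqref{eq:lambda}, for $w=2i$ (and also $w=2i-1$, which shares the same value) we have $\lambda(w)=2^{n-w}\,i\binom{w}{i}$, so the claim is $2^{-w}i\binom{2i}{i}\le \sqrt{w/\pi}$, i.e. $2^{-2i}i\binom{2i}{i}\le \sqrt{2i/\pi}$ for the even case, and a similar check for the odd case $w=2i-1$ where one compares against the smaller $\sqrt{(2i-1)/\pi}$. The central binomial coefficient satisfies the classical bound $\binom{2i}{i}\le \dfrac{4^i}{\sqrt{\pi i}}$ (a standard consequence of Stirling's inequalities, valid for all $i\ge1$), which gives $2^{-2i}i\binom{2i}{i}\le i/\sqrt{\pi i}=\sqrt{i/\pi}=\sqrt{2i/(2\pi)}\le\sqrt{2i/\pi}$, handling the even case with room to spare; the odd case follows because $\lambda$ is constant on the pair $\{2i-1,2i\}$ while $\sqrt{w}$ is monotone, so if the bound holds at $w=2i-1$ it is only easier, and one verifies $\sqrt{i/\pi}\le\sqrt{(2i-1)/\pi}$, i.e. $i\le 2i-1$, which holds for $i\ge1$.

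I expect the only genuine obstacle to be getting the constant right: one must be careful that the inequality $\binom{2i}{i}\le 4^i/\sqrt{\pi i}$ is used in the correct direction and holds for the full range of $i$ (it does, for all $i\ge1$; the reverse inequality $\binom{2i}{i}\ge 4^i/\sqrt{4i}$ is what would fail at small $i$, but we do not need it). A secondary point is to confirm the index bookkeeping: $w$ ranges over $1,\dots,n$ with $i=\lceil w/2\rceil$, $n$ even ensures $w=n$ corresponds to $i=n/2$ and $\binom{n}{n/2}$ is a genuine central binomial coefficient, and the edge term $w=n$ gives $\lambda(n)=1\cdot\frac n2\binom{n}{n/2}\le 2^n\sqrt{n/\pi}$ as a special case of the general estimate. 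Assembling these pieces, the termwise bound is multiplied by $A_w/N\ge0$, summed over $w$, and rescaled by $2N/\sqrt n$ to recover exactly the factor $\frac{2^{n-1}}{N^2}\sqrt{n/\pi}$ in front of $\tau_n(\tilde\cC_N)$, completing the proof.
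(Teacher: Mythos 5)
Your proof is correct and follows essentially the same route as the paper's: the paper likewise reduces to the pointwise estimate $\lambda(2i-1)=\lambda(2i)=2^{n-2i}i\binom{2i}{i}\le 2^n\sqrt{i/\pi}$ via the central binomial bound $i\binom{2i}{i}\le \sqrt{i/\pi}\,2^{2i}$, absorbs the odd-$w$ case using $i\le 2i-1$, and then substitutes \eqref{eq:2s}. The only blemish is a stray factor of $1/N$ in your intermediate displayed termwise inequality, but the reduced form $\lambda(w)\le 2^n\sqrt{w/\pi}$ that you actually prove is the correct one.
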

\begin{proof} Assume that $n$ is even. From \eqref{eq:dd} and \eqref{eq:lambda} we obtain
   \begin{align*}
   \frac 1N\sum_{i,j=1}^N \lambda(d_H(z_i,z_j))&=\sum_{w=1} A_w\lambda(w) \leq \sum_{i=1}^{n/2}(A_{2i-1}+A_{2i}) 2^{n}\sqrt {i/\pi}\\&= \frac{2^{n-1/2}}{\sqrt{\pi}}\sum_{i=1}^{n/2}(A_{2i-1}+A_{2i})\sqrt{2i}\le \frac{2^{n}}{\sqrt{\pi}}\sum_{w=1}^n A_w\sqrt w
   \end{align*}
where for the first inequality we used the estimate $i\binom{2i}i\le \sqrt{i/\pi}\,2^{2i},$ valid for all $i.$ 
Substituting the value of the sum from \eqref{eq:2s}, we obtain the claim.
\end{proof}
With minor differences, this result is also valid for odd $n$. 

Earlier results \cite[Thm.5.2]{Barg2021} give several estimates for average value of $\lambda;$ for instance,
for $n=2l-1$, $l$ even
   \begin{equation*}\label{eq:lambda1}
   \langle \lambda\rangle_{{\cC}_N} \le\lambda(l)(1-\frac1{2N}).
    \end{equation*}
Using this inequality and estimating the binomial coefficient, we obtain
  \begin{equation}\label{eq:upper-lp}
  \langle \lambda\rangle_{{\cC}_N}\le  2^{n-l} \frac l2\binom{l}{l/2}\le 2^{n-1/2} \sqrt{ l/{\pi}},
  \end{equation}
valid for all odd $n$. 
While in \cite{Barg2021} inequality \eqref{eq:lambda} is proved by linear programming in the Hamming space, similar estimates are also obtained 
from \eqref{eq:bs} and the upper bounds \eqref{eq:lb1}-\eqref{eq:lb3} (for $N$ in the range of their applicability), and they largely 
coincide with earlier results. For instance, using \eqref{eq:bs} and a bound of the form \eqref{eq:lb3a} with $N=\delta n^2,$ 
we obtain $\langle \lambda\rangle_{{\cC}_N}\le 2^{n-\frac12}\sqrt{\frac n\pi}(1-O(N^{-1/2})),$ which is only slightly inferior to  \eqref{eq:upper-lp}.

In summary, spherical embeddings of binary codes give an alternative way of proving lower bounds for their quadratic discrepancy.

\section{Proofs of the bounds}\label{sec:proofs-bounds}

In this section, we prove the bounds on the sum of distances stated in Sec.~\ref{sec:bounds}, using the energy function $E_h(n,N)$ with 
$h(t)=L(t)=-\sqrt{2(1-t)}$ (the negative distance). Accordingly, the upper and lower bounds of Sec.~\ref{sec:bounds} exchange their roles. All the derivatives $L^{(i)}(t)$, $i \geq 1$, are defined and positive in $[-1,1)$ and $\lim_{t \to 1^-} L^{(i)}(t)=+\infty$; $L(t)+2$ is nonnegative and increasing in $[-1,1]$, and thus $L(t)$ is absolutely monotone up to an additive constant. \blue{Thus, $L(t)$ fits the frameworks for ULB and UUB from \cite{BDHSS2016} and \cite{BDHSS2020}, respectively (the possible ULB application was mentioned already in the introduction of \cite{BDHSS2016}). }

\subsection{Derivation of the necessary parameters}

Here we explain the choice of the parameters in the Levenshtein framework used to derive the bounds.

The parameters $k$, $\varepsilon$, $m=2k-1+\varepsilon$, and $(\rho_i,\alpha_i)$, $i=0,1,\ldots,k-1+\varepsilon$, 
originate in the paper of Levenshtein \cite{lev92} (see also \cite[Section 5]{lev98}), where the author used them 
to establish optimality of his bound on the size of codes (see Theorem 5.39 in \cite{lev98}). 

For each positive integer $m=2k-1+\varepsilon$, where $\varepsilon \in \{0,1\}$ accounts for the parity of $m$, Levenshtein
used the degree $m$ polynomial
    $$
     f_m^{(n,s)}(t)=(t-\alpha_0)^{2-\varepsilon}(t-\alpha_{k-1+\varepsilon}) \prod_{i=1}^{k-2+\varepsilon} (t-\alpha_i)^2 
     $$
to obtain his universal upper bound $L_m(n,s)$ on the maximal cardinality of a code on $S^{n-1}$ with separation $s$. 
The numbers $\alpha_0 < \alpha_1 < \cdots < \alpha_{k-1+\varepsilon}$ belong to $[-1,1)$ and $\alpha_{k-1+\varepsilon}=s$ and 
$\alpha_0=-1$ if and only if $\varepsilon=1$. The polynomial $f_m$ can be written in the form
     \begin{equation} \label{lev-poly}
 f_m^{(n,s)}(t)=(t+1)^{\varepsilon} \left(P_k(t)P_{k-1}(s) - P_k(s)P_{k-1}(t)\right)^2/(t-s),
     \end{equation}
where $P_i(t)=P_i^{(\frac{n-1}{2},\frac{n-3}{2}+\varepsilon)}(t)$ is the Jacobi polynomial normalized to satisfy $P_i(1)=1$.
For small $m$ the zeros $\alpha_i$ of $f_m$ can be easily found.

The quadrature formula
\begin{equation} \label{QF}
f_0=\frac{f(1)}{L_m(n,s)}+\sum_{i=0}^{k-1+\varepsilon} \rho_i f(\alpha_i), 
\end{equation}
which is exact for all real polynomials $f(t)=\sum_{i=0}^{d} f_i P_i^{(n)}(t)$ of degree $d \leq m$, reveals a strong relation
between the Levenshtein bounds and the energy bounds, \blue{as explained in the next paragraph (for more details, see \cite[Section 2.2]{BDHSS2016} and \cite[Section 3.1]{BDHSS2020}). We also use \eqref{QF}} to calculate  the weights $\rho_i$; see, for example, \cite{BDL1999}, where the formulas for $\rho_i$ for odd $m$ were derived from a Vandermonde-type system. \blue{We also note that $L_m(n,s)=f_m^{(n,s)}(1)/f_0$, where $f_0$ is the constant coefficient of $f_m^{(n,s)}$.}

Formula \eqref{QF} is instrumental in the representation \eqref{ulb-general} of the ULB for the energy $E_h({\cC}_N)$ and the proof of its optimality in \cite{BDHSS2016}. \blue{For ULB, we need polynomials that are positive definite (i.e., their Gegenbauer
expansions have nonnegative coefficients) and such that $f \leq h$ in $[-1,1]$. First, $m=2k-1+\varepsilon$ is determined by the rule
$N \in [D^*(n,m),D^*(n,m+1)$. Hermite interpolation with $f(\alpha_i)=h(\alpha_i)$, where the nodes $\alpha_i$, $i=0,1,\ldots,k-1+\varepsilon$ arise as the roots of $L_m(n,s)=N$ considered as an equation in $s$, provides an LP polynomial satisfying
both requirements \cite[Theorem 3.1]{BDHSS2016}. } Then 
the quantity $f_0N-f(1)$, which gives rise to the ULB, is computed from \eqref{QF} 
(applied with $L_m(n,s)=N$) to give the right-hand side of \eqref{ulb-general}.
\blue{Note that eventually everything is determined by $n$ and $N$. We will see how it works in practice in Section 5.2.}

We next explain the derivation of the universal upper bound (UUB) from \cite{BDHSS2020} \blue{(see Section 3.2 in that paper)} which is based on choice of polynomials 
   $$
    f(t)=-\lambda f_m^{(n,s)}(t) + g_T(t) 
    $$
\blue{for given $n$, $N$, and $s$. As mentioned in the Introduction, 
the polynomial $f(t)$ has to satisfy $f \geq h$ for $t \in [-1,s]$ 
and to have $f_i \leq 0$ for $i \geq 1$ in its Gegenbauer expansion. 
To fulfil these conditions,} $f_m^{(n,s)}(t)$ is taken to be the degree-$m$ Levenshtein polynomial \eqref{lev-poly},  $g_T(t)$ interpolates the potential function at the multiset $T,$ which consists of the roots of $f_m^{(n,s)}(t)$ 
(counted with their multiplicities; this means that the degree of $g_T$ is $m-1$) and $\lambda=\max\{g_i/\ell_i : 1 \leq i \leq m-1\}$ is a positive constant. More specifically, where 
   $$
    f_m^{(n,s)}(t)=\sum_{i=0}^m \ell_i P_i^{(n)}(t), \ g_T(t)=\sum_{i=0}^{m-1} g_i  P_i^{(n)}(t) 
    $$
are the Gegenbauer expansions of $f_m^{(n,s)}(t)$ and $g_T(t)$, respectively (note that $\ell_i>0$ for every $i \leq m$ \cite[Theorem 5.42]{lev98}). 
The parameter $N_1=L_m(n,s) \geq N$, \blue{computed for given $n$ ans $s$ (the latter determining $m$ uniquely)}, is used \blue{to find the
parameters $\rho_i$ and $\alpha_i$ exactly as in the ULB part (but with $N_1$ instead of $N$; for this to work we assume that $N_1=L_m(n,s) \in [D^*(n,m),D^*(n,m+1))$). Note that} the equality $N_1=N$ holds if and only if there exists a universally optimal code of size $N$ in $n$ dimensions (in this case, ULB and UUB coincide\footnote{Having said that, we may view the difference between the ULB and UUB as a measure of how far the codes are from being universally optimal.}). In our computations of UUBs below we first find the Hermite interpolant $g_T(t)$, then the parameter $\lambda$ \blue{(which already gives $f(t)$)}, and finally compute the bound \eqref{uub-general}.

\subsection{Lower bounds}

\begin{proposition} \label{prop2-1}
For $2\le N\le n+1$ we have
\begin{equation} \label{ulb-deg1}
E_L(n,N) \geq -\tau_1(n,N).
\end{equation}
For $n+1\le N\le 2n$, we have 
\begin{equation} \label{ulb-deg2}
 E_L(n,N) \geq -\tau_2(n,N).
\end{equation}
For $2n\le N \le n(n+3)/2$, we have 
\begin{equation} \label{ulb-deg3}
E_L(n,N) \geq  -\tau_3(n,N).
\end{equation}
where $\tau_1$, $\tau_2$, and $\tau_3$ are defined in \eqref{eq:lb1}-\eqref{eq:lb3}.
\end{proposition}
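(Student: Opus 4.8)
The plan is to instantiate the universal lower bound \eqref{ulb-general} with the specific potential $h(t)=L(t)=-\sqrt{2(1-t)}$ for each of the three lowest-degree Levenshtein regimes $m=1,2,3$, and to show that after simplification the right-hand side becomes precisely $-\tau_1$, $-\tau_2$, $-\tau_3$ respectively. Since $\tau_n(\cC_N)=-E_L(\cC_N)$, the inequality $E_L(n,N)\ge -\tau_i(n,N)$ is equivalent to the desired upper bound on the sum of distances; so the whole task is an explicit evaluation of the parameters $(\rho_i,\alpha_i)$ from \cite{BDHSS2016} in terms of $n$ and $N$.

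First I would record, for each $m$, the Levenshtein polynomial $f_m^{(n,s)}$ in the form \eqref{lev-poly} and read off its roots $\alpha_0<\dots<\alpha_{k-1+\varepsilon}$. For $m=1$ ($k=1,\varepsilon=0$) there is a single node $\alpha_0=s$ with $L_1(n,s)=(1-s)/(-s/n)$-type expression; for $m=2$ ($k=1,\varepsilon=1$) the nodes are $\alpha_0=-1$ and $\alpha_1=s$; for $m=3$ ($k=2,\varepsilon=0$) the two nodes are the roots of the quadratic coming from $P_2(t)P_1(s)-P_2(s)P_1(t)=0$, which for Gegenbauer polynomials can be solved in closed form. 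Next I would invoke the rule $N\in[D^*(n,m),D^*(n,m+1))$ to fix $m$, then solve $L_m(n,s)=N$ for $s$ — this is the step that converts the $s$-parametrized Levenshtein data into functions of $N$. The segments $[2,n),[n+1,2n),[2n,n(n+3)/2)$ quoted in the introduction are exactly the ranges where $m=1,2,3$ apply, which matches the hypotheses of the three parts. The weights $\rho_i$ I would obtain from the quadrature formula \eqref{QF}, using the known Vandermonde-type formulas (e.g.\ from \cite{BDL1999} for odd $m$, and the analogous computation for $m=2$).

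With the nodes $\alpha_i$ and weights $\rho_i$ in hand as explicit algebraic functions of $n$ and $N$, the ULB \eqref{ulb-general} reads $E_L(n,N)\ge -N^2\sum_i \rho_i\sqrt{2(1-\alpha_i)}$, and it remains to verify the three identities
\[
N^2\sum_{i=0}^{k-1+\varepsilon}\rho_i\sqrt{2(1-\alpha_i)}=\tau_m(n,N),\qquad m=1,2,3,
\]
with $\tau_1,\tau_2,\tau_3$ as in \eqref{eq:lb1}--\eqref{eq:lb3} and $A_1,B_1$ as in \eqref{eq:A1}--\eqref{eq:B1}. For $m=1$ this is immediate since there is one node and one weight. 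For $m=2$ one node is $\alpha_0=-1$, contributing a clean $\sqrt{2\cdot 2}=2$ term, and $\alpha_1=s$ solved from $L_2(n,s)=N$; the resulting expression should collapse to \eqref{eq:lb2} after clearing the denominator $Nn+N-4n$. For $m=3$ the two nodes are conjugate roots of a quadratic, so the sum $\rho_0\sqrt{1-\alpha_0}+\rho_1\sqrt{1-\alpha_1}$ has to be massaged into the single square-root form \eqref{eq:lb3}; this is where symmetric-function manipulations (expressing the sum via $\alpha_0+\alpha_1$, $\alpha_0\alpha_1$, and the corresponding combinations of $\rho_i$) are essential, and it is the step I expect to be the main obstacle — not conceptually, but because the algebra is heavy and the stated closed forms for $A_1,B_1$ are unusually intricate. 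I would carry this out with a computer algebra system and then present the key intermediate quantities (the two nodes, the two weights, and the value of $L_3(n,s)$) so the reader can check the collapse. Finally I would note that the optimality of this choice of polynomial, hence the fact that no degree-$\le m$ polynomial gives a better bound, is inherited directly from \cite[Theorem 3.1 and its optimality statement]{BDHSS2016}, so nothing extra is needed there.
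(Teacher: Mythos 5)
Your proposal follows essentially the same route as the paper: for each $m=1,2,3$ one solves $L_m(n,s)=N$ to get the nodes $\alpha_i$ (giving $\alpha_0=-1/(N-1)$ for $m=1$; $\alpha_0=-1$, $\alpha_1=-(2n-N)/(n(N-2))$ for $m=2$; and the roots of $n(N-n-1)s^2+n(n-1)s+2n-N=0$ for $m=3$), extracts the weights $\rho_i$ from the quadrature formula \eqref{QF}, and for $m=3$ collapses $\rho_0 L(\alpha_0)+\rho_1 L(\alpha_1)$ into a single square root via symmetric functions of $\alpha_0,\alpha_1$ --- exactly the steps you anticipate, including the heavy algebra behind $A_1,B_1$. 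The only slip is your parenthetical formula for the first Levenshtein bound, which should be $L_1(n,s)=(s-1)/s$ with no factor of $n$; this does not affect the structure of the argument.
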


These estimates constitute the first three bounds in \eqref{ulb-general}, beginning with expressing the 
parameters $(\rho_i,\alpha_i)$ as functions of the dimension $n$ and cardinality $N \in [D^*(n,m),D^*(n,m+1))$, $m=1,2,3$. In all three proofs below we first
find the roots $\alpha_i$ of the Levenshtein polynomial \eqref{lev-poly} setting $L_m(n,s)=N$ for $m=1,2,3$, respectively. 
This is equivalent to solving in $s$ the equation $L_m(n,s)=N$. Then we give the weights $\rho_i$, computed by
setting suitable polynomials (we used $f(t)=1,t,t^2,t^3$; for example $f(t)=1$ gives the identity
$\sum_{i=1}^{k-1+\varepsilon} \rho_i =1-1/N$) in the quadrature formula \eqref{QF}.

{\it Proof of \eqref{ulb-deg1}}.
For the degree 1 bound \eqref{ulb-deg1} we have $\alpha_0=-1/(N-1)$ and $\rho_0 = -1/N\alpha_0 = (N-1)/N$. Therefore
\[ E_L(n,N) \geq N^2\rho_0 L(\alpha_0) = N(N-1)L(\alpha_0)= -N\sqrt{2N(N-1)}. \]
\hfill $\Box$

{\it Proof of \eqref{ulb-deg2}.} For degree 2 (with $k=1$ and $\varepsilon=1$) we have 
$\alpha_0=-1$, $\alpha_1 = -\frac{2n-N}{n(N-2)}$, $\rho_0 = \frac{N-n-1}{Nn+N-4n}$ and 
$\rho_1 = \frac{n(N-2)^2}{N(Nn+N-4n)}$. Since $L(-1)=-2$ and $L(\alpha_1)=-\sqrt{\frac{2N(n-1)}{n(N-2)}}$, we obtain that
the expression $N^2(\rho_0 L(\alpha_0) + \rho_1 L(\alpha_1))$ from \eqref{ulb-general} 
is equal to $-\tau_2(n,N)$ as given in \eqref{eq:lb2}. \hfill $\Box$

{\it Proof of \eqref{ulb-deg3}.}
For the degree-3 lower bound we take $k=2$ and $\varepsilon=0$. By \eqref{ulb-general} we have
\begin{equation} \label{ulb-3}
E_L(n,N) \geq N^2 (\rho_0 L(\alpha_0)+\rho_1 L(\alpha_1)),
\end{equation}
where $N \in [D^{\ast}(n,3),D^{\ast}(n,4)]=[2n,n(n+3)/2]$, and
\[ \alpha_{0,1} = \frac{-n(n-1) \pm \sqrt{D}}{2n(N-n-1)}, \  \ D=n^2(n-1)^2+4n(N-n-1)(N-2n), \]
are the roots of the quadratic equation $n(N-n-1)s^2+n(n-1)s+2n-N=0$ obtained from the equality $L_3(n,s)=N$. 
Further, the weights $\rho_0$ and $\rho_1$ satisfy the formulas 
\[ \rho_0N=\frac{1-\alpha_1^2}{\alpha_0(\alpha_1^2-\alpha_0^2)}, \ \ \rho_1N=\frac{1-\alpha_0^2}{\alpha_1(\alpha_0^2-\alpha_1^2)} \]
(note that the numerators resemble the potential $L(t)$ computed for $\alpha_0,\alpha_1$; this will make
our expressions symmetric). In the sequel, 
we use the following symmetric expressions for $\alpha_0$ and $\alpha_1$
  \begin{gather*}
 \alpha_0+\alpha_1=-\frac{n-1}{N-n-1}, \  \ \alpha_0 \alpha_1=- \frac{N-2n}{n(N-n-1)}, \ \ \alpha_0^2-\alpha_1^2=\frac{(n-1)\sqrt{D}}{n(N-n-1)^2}, \\
 (1-\alpha_0)(1-\alpha_1)=\frac{(n-1)N}{n(N-n-1)}, \ \  (1+\alpha_0)(1+\alpha_1)=\frac{(n-1)(N-2n)}{n(N-n-1)}.
 \end{gather*}

Our task is to express the bound \eqref{ulb-3} via $n$ and $N$. Using the above equalities, we obtain 
    \begin{align*}
E_L(n,N) &\geq N (\rho_0N L(\alpha_0)+\rho_1N L(\alpha_1)) \\
&= -N \left(\frac{(1-\alpha_1^2)\sqrt{2(1-\alpha_0)}}{\alpha_0(\alpha_1^2-\alpha_0^2)}+
\frac{(1-\alpha_0^2)\sqrt{2(1-\alpha_1)}}{\alpha_1(\alpha_0^2-\alpha_1^2)}  \right) \\
=& -\frac{n^2N(N-n-1)^3}{(n-1)(N-2n)\sqrt{D}}\left(\alpha_1(1-\alpha_1^2)\sqrt{2(1-\alpha_0)}-\alpha_0(1-\alpha_0^2)\sqrt{2(1-\alpha_1)}\right).
\end{align*}

Consider the expression $S=\alpha_1(1-\alpha_1^2)\sqrt{2(1-\alpha_0)}-\alpha_0(1-\alpha_0^2)\sqrt{2(1-\alpha_1)}$.
We compute
\begin{align*}
\frac{S^2}{2} &= \frac{(n-1)\left(A-B\right)N}{n(N-n-1)},
\end{align*}            
and thus
   $$
    S=\sqrt{\frac{2(A-B)(n-1)N}{n(N-n-1)}},
    $$
where we have denoted
\begin{align*}
A 
&= \frac{(n-1)(N-2n)^2[Nn^3+(2N-1)n^2-(N-1)(7N-2)n+(N-1)^2(2N+3)]}{n^2(N-n-1)^5}
\end{align*}   
and
\begin{align*}
B 
&= - \frac{2(n-1)(N-2n)^2\sqrt{(n-1)N}}{(n(N-n-1))^{5/2}}.
\end{align*}   

Therefore
\[ E_L(n,N) \geq  -\frac{nN(N-n-1)^2}{(N-2n)\sqrt{D}} \sqrt{\frac{2(A-B)nN(N-n-1)}{n-1}}.  \]
\blue{Performing simplifications under the square root, we obtain
\begin{eqnarray*} 
\frac{2(A-B)nN(N-n-1)}{n-1} &=& 2nN(N-n-1)\left(\frac{(N-2n)^2A_1}
{n^2(N-n-1)^5}+\frac{2(N-2n)^2\sqrt{N(n-1)}}{n^{5/2}(N-n-1)^{5/2}}\right) \\
&=& \frac{2N(N-2n)^2}{n^4(N-n-1)^4}\left(n^3A_1+2\sqrt{N(n-1)n^5(N-n-1)^5}\right) \\
&=& \frac{2N(N-2n)^2}{n^2(N-n-1)^4} \left(nA_1+2(N-n-1)^2B_1\right)
\end{eqnarray*}
with $A_1$ and $B_1$ as in \eqref{eq:A1} and \eqref{eq:B1}, respectively.
Upon substituting this back into the bound for $E_L(n,N)$, we obtain
   \[ 
E_L(n,N) \geq  -\frac{N\sqrt{2N(nA_1+2(N-n-1)^2B_1)}}{\sqrt{D}},  \]}
establishing the bound \eqref{ulb-deg3} with $\tau_3(n,N)$ as in \eqref{eq:lb3}. 
\hfill $\Box$

\subsection{Upper bounds} 
In this section we prove bounds \eqref{eq:ub1}-\eqref{eq:ub3}, deriving an explicit form of the first three universal upper bounds for ${\cC}_N(n,s)$ codes from \cite{BDHSS2020} for $L(t)$ as functions of 
$n$, $N$ and $s$. In addition to the parameters $(\rho_i,\alpha_i)$ as explained above (but now related to $N_1=L_m(n,s)$ instead 
of $N$), we need to find the polynomial $g_T(t)$, then the real parameter  $\lambda$ and finally the polynomial $f(t)$ as explained in the last paragraph of Section 5.1. 
Recall again that because of the sign change, the inequalities \eqref{eq:ub1}-\eqref{eq:ub3} are inverted.

\begin{proposition} \label{prop2-4}
For $N \in [2,n+1]$ and $s \in [-1/(N-1),-1/n]$, we have
\begin{equation} \label{uub-deg1}
E_L(n,N,s) \leq -\tau^{(1)}(n,N,s).
\end{equation}
For $N \in [n+1,2n]$ and $s \in [(N-2n)/n(N-2),0]$, we have 
\begin{equation} \label{uub-deg2}
 E_L(n,N,s) \leq  -\tau^{(2)}(n,N,s).
\end{equation}
For $N \in [2n,n(n+3)/2]$ and $s \in \left[\frac{\sqrt{n^2(n-1)^2+4n(N-n-1)(N-2n)}-n(n-1)}{2n(N-n-1)},\frac{\sqrt{n+3}-1}{n+2}\right]$, we have
\begin{equation} \label{uub-deg3}
\begin{split}
E_L(n,N,s) &\leq -\tau^{(3)}(n,N,s) \\
\end{split}
\end{equation}
where the quantities $\tau^{(1)},\tau^{(2)},\tau^{(3)}$ are defined in \eqref{eq:ub1}-\eqref{eq:ub3} above.
\end{proposition}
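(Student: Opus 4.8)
The plan is to prove Proposition \ref{prop2-4} in three parts, mirroring the structure of the lower-bound section, by specializing the universal upper bound \eqref{uub-general} from \cite{BDHSS2020} to the potential $L(t)=-\sqrt{2(1-t)}$. In each case I first record the Levenshtein parameters $(\rho_i,\alpha_i)$ for the relevant degree $m=1,2,3$, this time expressed as functions of $n$ and $s$ through the relation $N_1=L_m(n,s)$, since the UUB is driven by $N_1$ rather than by $N$ itself. These are the same zeros of the Levenshtein polynomial \eqref{lev-poly} that appeared in the proof of Proposition \ref{prop2-1}, but now I keep $s$ free: for $m=1$ I have the single node $\alpha_0=s$ with $N_1=L_1(n,s)$; for $m=2$ the nodes $\alpha_0=-1$ and $\alpha_1=s$; for $m=3$ the two nodes $\alpha_{0},\alpha_1$ are the roots of the quadratic $n(N_1-n-1)s'^2+n(n-1)s'+(2n-N_1)=0$ evaluated so that $\alpha_{k-1+\varepsilon}=s$. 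The admissible ranges of $s$ stated in the proposition are exactly the intervals on which $L_m(n,s)\ge N$ and on which $L_m(n,s)$ is the operative Levenshtein bound (the lower endpoint coming from $N_1\ge N$, the upper endpoint being the usual Levenshtein cutoff); this is where I invoke the discussion in Section 5.1.

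Next I carry out the construction of the linear-programming polynomial $f(t)=-\lambda f_m^{(n,s)}(t)+g_T(t)$. Here $g_T$ is the Hermite interpolant of $L$ at the multiset $T$ of roots of the Levenshtein polynomial (with multiplicities, so $\deg g_T=m-1$), and $\lambda=\max\{g_i/\ell_i: 1\le i\le m-1\}$ is the nonnegative constant forcing the Gegenbauer coefficients $f_i$ ($i\ge 1$) to be nonpositive; by \cite[Theorem 5.42]{lev98} each $\ell_i>0$, so $\lambda$ is well defined. For $m=1,2$ the multiset $T$ is a single point or a pair of simple/low-multiplicity points, so $g_T$ is linear and $\lambda$ is a single ratio — these computations are short. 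Then the bound \eqref{uub-general} reads $E_L(n,N,s)\le (N/N_1-1)Nf(1)+N^2\sum_i\rho_i L(\alpha_i)$, and I substitute $f(1)=-\lambda f_m^{(n,s)}(1)+g_T(1)$ together with $N_1=L_m(n,s)=f_m^{(n,s)}(1)/f_0$. For $m=1$ this collapses almost immediately to $\tau^{(1)}(n,N,s)=N(N-1)\sqrt{2(1-s)}$, consistent with Remark 1 after the bounds; for $m=2$ one gets \eqref{eq:ub2} after routine algebra using $L(-1)=-2$ and $L(s)=-\sqrt{2(1-s)}$.

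The degree-$3$ case \eqref{uub-deg3} is the main obstacle, and it is essentially a bookkeeping marathon rather than a conceptual one. After solving the quadratic for $\alpha_0,\alpha_1$ in terms of $n$ and $s$ (the discriminant producing the $\sqrt{B_5}$ and related radicals in \eqref{eq:AB}), I compute $\rho_0,\rho_1$ from the quadrature formula \eqref{QF} exactly as in the proof of \eqref{ulb-deg3}, then evaluate $g_T(t)$ as the degree-$2$ Hermite interpolant of $L$ at the double node and simple node of $f_3^{(n,s)}$, extract $\lambda$, and finally assemble $f(1)$ and the weighted sum $\sum\rho_i L(\alpha_i)$. The algebra is heavy enough that, as in Sec.~2, I will present the auxiliary quantities $A_2,B_2,A_4,B_4,C_4,A_5,B_5,A_6$ from \eqref{eq:AB} as the natural groupings that emerge when one simplifies under the square roots, and verify symbolically (Mathematica) that the resulting expression matches $-\tau^{(3)}(n,N,s)$ in \eqref{eq:ub3}. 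The hardest part is keeping the nested radicals — $\sqrt{(1-s)B_5}$ inside the numerator and $\sqrt{2B_5}$ in the denominator, together with $\sqrt{A_6}$ which itself contains $\sqrt{B_2}$ — consistent throughout the simplification; choosing the right intermediate substitutions (in particular factoring out $(1+ns)$ and $(1+2s+ns^2)$ early) is what makes the final collapse to \eqref{eq:ub3} manageable. Finally, since $L(t)+2$ is absolutely monotone, the sign change turns each of \eqref{uub-deg1}--\eqref{uub-deg3} into the asserted lower bound on $\tau_n(N,s)$, completing the proof of the displayed statement.
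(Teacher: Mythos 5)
Your proposal follows essentially the same route as the paper's proof: specializing the UUB framework of \cite{BDHSS2020} with $N_1=L_m(n,s)$, building $f=-\lambda f_m^{(n,s)}+g_T$ from the Hermite interpolant of $L$ at the roots of the Levenshtein polynomial, choosing $\lambda=\max\{g_i/\ell_i\}$, and assembling \eqref{uub-general} (the paper, like you, resolves the degree-3 algebra with computer assistance, and additionally makes explicit that the max is attained at $i=1$ by proving $f_2<0$ via a degree-8 polynomial inequality --- a detail subsumed in your ``extract $\lambda$'' step). No substantive differences.
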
 

\begin{remark}
We set upper limits for $s$ in all three cases as suggested implicitly by the framework in \cite{BDHSS2020}.  
The bounds are valid beyond these limits but most likely they can be improved by polynomials of higher degrees. 
\end{remark}

{\it Proof of \eqref{uub-deg1}.}
For fixed $n$, $N \in [2,n+1]$ and $s \in [-1/(N-1),-1/n]$, we consider the degree 1 UUB 
\begin{equation*}
E_L(n,N,s) \leq N\left(\frac{N}{L_1(n,s)}-1\right)f(1) +N^2\rho_0 L(s),
\end{equation*}
where the parameters are as follows: $L_1(n,s)=(s-1)/s=: N_1$ is the first Levenshtein bound,
\[ f(t)=-\lambda f_1^{(n,s)}(t)+g_T(t)=-\lambda (t-s)+g_T(t) \]
is our linear programming polynomial, and $\alpha_0=s$, $\rho_0=-1/N_1s =1/(1-s)$
are Levenshtein's parameters corresponding to $s$ (i.e., to $N_1$). The polynomial $g_T(t)$ is constant and is found from 
$g_T(s)=L(s)$. Then $\lambda=0$ and $f(t)=L(s)$ give the bound  
\[ E_L(n,N,s) \leq \left(\frac{N}{N_1}-1\right)NL(s) +N^2\rho_0L(s)=N(N-1)L(s). \]
\qed

\begin{remark} As already observed, this bound is straightforward upon estimating all terms in the energy sum $E_L({\cC}_N)$
by the constant $L(s)$.
\end{remark}

{\it Proof of \eqref{uub-deg2}.}
For fixed $n$, $N \in [n+1,2n]$ and $s \in [(N-2n)/n(N-2),0]$, we consider the degree 2 UUB following the derivation in \cite{BDHSS2020}
\begin{equation} \label{uub-2}
E_L(n,N,s) \leq N\left(\frac{N}{L_2(n,s)}-1\right)f(1) +N^2(\rho_0 L(\alpha_0)+\rho_1 L(\alpha_1)),
\end{equation}
where the parameters are defined as follows: $N_1:=L_2(n,s)=2n(1-s)/(1-ns)$ is the second Levenshtein bound,
\[ f(t)=-\lambda f_2^{(n,s)}(t)+g_T(t)=-\lambda (t+1)(t-s)+g_T(t) \]
is our linear programming polynomial (to be described below), and
\[ \alpha_0=-1, \ \alpha_1=s, \ \rho_0=\frac{N_1-n-1}{N_1 n+N_1-4n}, \ \rho_1=\frac{n(N_1-2)^2}{N_1(N_1 n+N_1-4n)} \]
are the Levenshtein parameters corresponding to $s$ (compare with the parameters in the proof of \eqref{ulb-deg2}).

The polynomial $g_T(t)$ with $T=\{-1,s\}$, i.e. $g_T(-1)=L(-1)$, $g_T(s)=L(s)$, becomes 
\[ g_T(t)=\frac{L(s)-L(-1)}{1+s}t+\frac{L(s)+sL(-1)}{1+s}=\frac{(2-\sqrt{2(1-s)})t-\blue{2s-}\sqrt{2(1-s)}}{1+s}. \]
The coefficient $\lambda$ is chosen to make $f_1=0$ in the Gegenbauer expansion $f(t)=f_2P_2^{(n)}(t)+f_1P_1^{(n)}(t)+f_0$
  (this choice is unique). This gives $\lambda=\frac{2-\sqrt{2(1-s)}}{1-s^2}$ and 
\[ f(t)=-\frac{(2-\sqrt{2(1-s)})t^2-2s^2+\sqrt{2(1-s)}}{1-s^2}, \]
whence $f(1)=-2$. 

Therefore, \eqref{uub-2} gives
\[ E_L(n,N,s) \leq N\left(\frac{N}{N_1}-1\right)(-2) +
N^2\left(\frac{(N_1-n-1)(-2)}{N_1n+N_1-4n}+\frac{n(N_1-2)^2(-\sqrt{2(1-s)})}{N_1(N_1n+N_1-4n)}\right), \] 
implying \eqref{uub-deg2}. \qed

{\it Proof of \eqref{uub-deg3}.} For fixed $n$, $N$, and $s$ as in the condition \eqref{eq:s}, we derive the degree 3 UUB 
\begin{equation} \label{uub-21}
E_L(n,N,s) \leq N\left(\frac{N}{L_3(n,s)}-1\right)f(1) +N^2(\rho_0 L(\alpha_0)+\rho_1 L(\alpha_1)),
\end{equation}
where the parameters are defined as follows:
\[ N_1:=L_3(n,s)=\frac{n(1-s)((n+1)s+2)}{1-ns^2} \]
is the third Levenshtein bound,
\[ f(t)=-\lambda f_3^{(n,s)}(t)+g_T(t)=-\lambda (t-\alpha_0)^2(t-s)+g_T(t) \]
is the linear programming polynomial to be found, and 
\[ \alpha_0=\frac{-n(n-1)-\sqrt{D_1}}{2n(N_1-n-1)} = -\frac{1+s}{1+ns}, \ \alpha_1=\frac{-n(n-1)+\sqrt{D_1}}{2n(N_1-n-1)} = s, \]
\[ D_1=n^2(n-1)^2+4n(N_1-n-1)(N_1-2n) = \frac{n^2(n-1)^2(1+2s+ns^2)^2}{(1-ns^2)^2}, \]
\[ \rho_0=\frac{1-\alpha_1^2}{N_1\alpha_0(\alpha_1^2-\alpha_0^2)} = \frac{(1+ns)^3}{n((n+1)s+2)(1+2s+ns^2)}, \] 
\[ \rho_1=\frac{1-\alpha_0^2}{N_1\alpha_1(\alpha_0^2-\alpha_1^2)} = \frac{n-1}{n(1-s)(1+2s+ns^2)}, \]
are the Levenshtein's parameters corresponding to $s$ (note that they are also shown to depend on $n$ and $s$ only).

The ULB part $\rho_0 L(\alpha_0)+\rho_1 L(\alpha_1)$ in \eqref{uub-21} can be found as in the proof of \eqref{ulb-deg3} \blue{but with $N_1$ instead of $N$. Explicitly, this means that
    $$ 
    \rho_0 L(\alpha_0)+\rho_1 L(\alpha_1)=-\frac{1}{N_1} \sqrt{\frac{2N_1(nA_1+2(N_1-n-1)^2B_1)}{D_1}}, 
    $$
where $A_1$ and $B_1$ are as in \eqref{eq:A1} and \eqref{eq:B1}, respectively, but with $N_1$ instead of $N$, and $D_1$ as above (so $D_1$ has the same form as $D,$ but with $N_1$ instead of $N$).} We obtain
\begin{equation} \label{uub-22}
E_L(n,N,s) \leq \frac{N}{N_1}\left((N-N_1)f(1)-N\sqrt{\frac{2N_1(nA_1+2(N_1-n-1)^2B_1)}{D_1}}\right),
\end{equation}
\blue{In order to rewrite \eqref{uub-22} in terms of $n$ and $s$, we 
first write the ULB part in terms of $n$ and $s$ by using the above expressions, i.e.} 
\begin{eqnarray*}
A_1 &=& \frac{(n-1)^2[(1+ns)^5(1-s)+(n-1)^2((n+1)s+2)]}{(1-ns^2)^3}, \\
B_1 &=& \frac{n(n-1)\sqrt{(1-s)(1+ns)((n+1)s+2)}}{1-ns^2}, \\
N_1-n-1 &=& \frac{(n-1)(1+ns)}{1-ns^2}, 
\end{eqnarray*}
and $D_1=D_1(n,s)$ as found above. We find 
\begin{eqnarray} \label{eq:uub3-ulb-part}
E_L(n,N,s) &\leq& \blue{ \frac{N}{N_1}\left((N-N_1)f(1)-\frac{(1-ns^2)N\sqrt{2N_1(nA_1+2(N_1-n-1)^2B_1)}}{n(n-1)(1+2s+ns^2)}\right)} 
\nonumber \\
&=& \blue{\frac{N}{N_1}\left((N-N_1)f(1)-\frac{N\sqrt{2N_1(nA_2+2(1+ns)^2B_2)}}{n(1+2s+ns^2)}\right)} \nonumber \\
&=& \frac{N}{N_1}\left((N-N_1)f(1)-\frac{N\sqrt{2(1-s)((n+1)s+2)(A_2+2(1+ns)^2B_2)}}{(1+2s+ns^2)(1-ns^2)}\right), 
\end{eqnarray}
\blue{where $A_2$ and $B_2$ are as given in \eqref{eq:AB}. }

Second, we find $f(t)$ in order to compute $f(1)$. The polynomial $g_T(t)=at^2+bt+c$ interpolates $L(t)$ in $T=\{\alpha_0,\alpha_0,\alpha_1\}$, i.e. 
$g(\alpha_0)=L(\alpha_0)$, $g^\prime (\alpha_0)=L^\prime(\alpha_0)$, and $g(\alpha_1)=L(\alpha_1)$.
Resolving this to find $a$, $b$, and $c$, we obtain the Gegenbauer expansion of $f(t)$ as follows
\begin{eqnarray*}
&& f(t)=-\frac{\lambda(n-1)}{n+2} P_3^{(n)}(t)+\frac{(n-1)(a+\lambda(2\alpha_0+\alpha_1))}{n}P_2^{(n)}(t) \\
&& +\, \left(b-\frac{\lambda((\alpha_0^2+2\alpha_0\alpha_1)(n+2)+3)}{n+2}\right)P_1^{(n)}(t) +
\frac{\lambda(\alpha_0^2\alpha_1 n +2\alpha_0+\alpha_1)+a+cn}{n} P_0^{(n)}(t),
\end{eqnarray*}
where 
\begin{eqnarray*}
a &=& \frac{L(\alpha_1)-L(\alpha_0)-L^\prime(\alpha_0)(\alpha_1-\alpha_0)}{(\alpha_1-\alpha_0)^2}, \\
b &=& \frac{L^\prime(\alpha_0)(\alpha_1^2-\alpha_0^2)-2\alpha_0(L(\alpha_1)-L(\alpha_0))}{(\alpha_1-\alpha_0)^2}, \\
c &=& \frac{\alpha_0^2(L(\alpha_1)-L(\alpha_0))-\alpha_0\alpha_1(\alpha_1-\alpha_0)L^\prime(\alpha_0)+(\alpha_1-\alpha_0)^2L(\alpha_0)}
{(\alpha_1-\alpha_0)^2}.
\end{eqnarray*}

According to the rule in Theorem 3.2 from \cite{BDHSS2020}, the coefficient $\lambda$ has to be chosen as $\max\{g_1/\ell_1,g_2/\ell_2\},$ which is equivalent to the choice between 
$\{ f_1=0,f_2<0\}$  and $\{f_1<0,f_2=0\}$, respectively. We will prove below that $f_2<0$, i.e., that the first of these conditions is realized
for all $n$ and $s$ under consideration. 

The equality $f_1=0$ gives 
   \begin{align*}
\lambda&=\frac{b(n+2)}{(\alpha_0^2+2\alpha_0\alpha_1)(n+2)+3} \\
& = \frac{(n+2)(L^\prime(\alpha_0)(\alpha_1^2-\alpha_0^2)-2\alpha_0(L(\alpha_1)-L(\alpha_0)))}
{(\alpha_1-\alpha_0)^2((\alpha_0^2+2\alpha_0\alpha_1)(n+2)+3)}. 
   \end{align*}
Then 
\[ f(1) = -\lambda (1-\alpha_0)^2(1-\alpha_1)+a+b+c = \frac{A_3 (L(\alpha_1)-L(\alpha_0))+B_3 L(\alpha_0)-C_3 L^\prime(\alpha_0)}{B_3}, \]
where
\begin{eqnarray*}
A_3 &=& (1-\alpha_0)^2((n+2)(1+\alpha_0)^2-n+1) = \frac{(n-1)((n+1)s+2)^2((n-2)s^2-2ns-1)}{(1+ns)^4}, \\
B_3 &=& (\alpha_1-\alpha_0)^2((\alpha_0^2+2\alpha_0\alpha_1)(n+2)+3) \\
    &=& -\frac{(1+2s+ns^2)^2(2n(n+2)s^3-(n^2-5n-2)s^2-6ns-n-5)}{(1+ns)^4}, \\
C_3 &=& (1-\alpha_0)(1-\alpha_1)(\alpha_1-\alpha_0)((n+2)(\alpha_0+\alpha_1+\alpha_0\alpha_1)+3) \\
    &=& \frac{(n-1)(1-s)((n+1)s+2)(1+2s+ns^2)((n+2)s^2+2s-1)}{(1+ns)^3}. \\
\end{eqnarray*}
Therefore
\[ f(1) = \frac{((n+1)s+2)\left[(1-s)(1+ns)A_4 + B_4\sqrt{(1-s)B_5}\right]} {(1+2s+ns^2)^2C_4\sqrt{2B_5}}, \]
where $A_4$, $B_4$, $B_5$ and $C_4$ are as given in Equation \eqref{eq:AB} in Section 2.

Substituting these parameters into \eqref{eq:uub3-ulb-part} and performing simplifications, we 
eventually obtain \eqref{eq:ub3}:
\blue{
\begin{eqnarray*}
E_L(n,N,s) &\leq& \frac{N}{N_1}\left((N-N_1)f(1)-\frac{N\sqrt{2(1-s)((n+1)s+2)(A_2+2(1+ns)^2B_2)}}{(1+2s+ns^2)(1-ns^2)}\right), \\
&=& \frac{N(1-ns^2)}{n(1-s)((n+1)s+2)}\left(\frac{A_5f(1)}{1-ns^2}-\frac{N\sqrt{2(1-s)((n+1)s+2)(A_2+2(1+ns)^2B_2)}}{(1+2s+ns^2)(1-ns^2)}\right), \\
&=& \frac{NA_5((1-s)(1+ns)A_4+B_4\sqrt{(1-s)B_5})}{n(1-s)(1+2s+ns^2)^2C_4 \sqrt{2B_5}}-\frac{N^2\sqrt{2(1-s)((n+1)s+2)(A_2+2(1+ns)^2B_2)}}{n(1-s)((n+1)s+2)(1+2s+ns^2)}, \\
&=& \frac{NA_5((1-s)(1+ns)A_4+B_4\sqrt{(1-s)B_5})}{n(1-s)(1+2s+ns^2)^2C_4 \sqrt{2B_5}}-\frac{N^2\sqrt{2(1-s)(A_2+2(1+ns)^2B_2)}}{n(1-s)(1+2s+ns^2)\sqrt{B_5}}, \\
&=& \frac{N[A_5((1-s)(1+ns)A_4+B_4\sqrt{(1-s)B_5}) - 2N(1+2s+ns^2)C_4\sqrt{A_6}]}{n(1-s)(1+2s+ns^2)^2C_4\sqrt{2B_5}},
\end{eqnarray*}
}
\blue{where $A_i$, $B_i$, and $C_i$ are as given in \eqref{eq:AB}. }

The condition $f_2<0$ is equivalent to $\lambda(2\alpha_0+s) + a < 0$. This gives the inequality
\[ \frac{6B_6\sqrt{(1-s)(1+ns)((n+1)s+2)} - C_6}{C_4} < 0, \]
where
\begin{eqnarray*}
 B_6 &=& (n-2)(n+1)s^2-4s-n-1, \\
 C_6 &=& n^3(n+2)s^6 + 3n^2(n+2)s^5 - 3n(n^2-n-2)s^4 + 2(3n^3-6n^2-8n-4)s^3 + \\ 
 && 3(3n^2-16n-14)s^2 - 3(2n^2+5n+18)s - 11n - 13. \\
\end{eqnarray*}
We have $C_4<0$ since $2n(n+2)s^3<n+5$ follows for $n \geq 3$ and $0<s<(-1+\sqrt{n+3})/(n+2)$ (just use that $s<1/\sqrt{n+2}$). 
It remains to see that $6B_6\sqrt{(1-s)(1+ns)((n+1)s+2)} > C_6$. Since $B_6<0$ for $0<s<(-1+\sqrt{n+3})/(n+2)$, we need to prove 
that $C_6^2> 36B_6^2 (1-s)(1+ns)((n+1)s+2)$. This inequality is reduced to an 8-degree polynomial (in $s$) inequality shown to hold 
true by a computer algebra system.  \qed

{\bf Acknowledgements.} We thank the referees for spending time and effort on our manuscript and for providing an extensive set of remarks which improved the exposition.

\providecommand{\bysame}{\leavevmode\hbox to3em{\hrulefill}\thinspace}
\providecommand{\MR}{\relax\ifhmode\unskip\space\fi MR }
% \MRhref is called by the amsart/book/proc definition of \MR.
\providecommand{\MRhref}[2]{%
  \href{http://www.ams.org/mathscinet-getitem?mr=#1}{#2}
}
\providecommand{\href}[2]{#2}

\end{document}